\newtheorem{theorem}{Theorem}
\newtheorem{lemma}{Lemma}
\newtheorem{prop}{Proposition}
\newtheorem{corollary}{Corollary}
\renewcommand{\E}{\mathbb{E}}
\newcommand{\Var}{\mathbb{V}{\rm ar}}
\newcommand{\Prob}{\mathbb{P}}
\newcommand{\field}{\mathbb{F}}
\newcommand{\indicator}{\boldsymbol{1}}
\newcommand{\given}{\, \vert \,}
\newcommand{\convL}{\, \overset{L_1}{\longrightarrow} \,}
\newcommand{\convP}{\, \overset{P}{\longrightarrow} \,}
\newcommand{\convLL}{\, \overset{L_2}{\longrightarrow} \,}
\newcommand{\convD}{\, \overset{D}{\longrightarrow} \,}
\newcommand{\degree}{{\rm deg}}
\begin{document}
\begin{center}
	{\Large \bf  
	The Zagreb index of several random models}
	
	\bigskip
	{\bf Panpan Zhang}
	
	\bigskip
	{\tiny Department of Biostatistics, Epidemiology and Informatics, Perelman School of Medicine, University of Pennsylvania, Philadelphia, PA 19104, U.S.A.}
	
	\bigskip
	
	\today
\end{center}

\bigskip\noindent
{\bf Abstract.}
In this article, we investigate the Zagreb index, a kind of graph-based topological index, of several random networks, including a class of networks extended from random recursive trees, plain-oriented recursive trees, and random caterpillars growing in a preferential attachment manner. We calculate the mean and variance of the Zagreb index for each class. In addition, we prove that the asymptotic distribution of the Zagreb index for the first class is normal, and that the asymptotic distribution of the Zagreb index for the second class is skewed to the right. 

\bigskip
\noindent{\bf AMS subject classifications.}

Primary: 90B15

Secondary: 60B10; 60F05

\bigskip
\noindent{\bf Key words.} Combinatorial probability; Martingale; Moments; Random network; Recurrence methods; Zagreb index

\section{Introduction}
\label{Sec:Intro}

A topological index, in chemical graph theory, is a metric that quantifies the structure of the molecular graph of a chemical compound via a number. The {\em Zagreb index}~\cite{Gutman} is a topological index that has found a plethora of applications in mathematical chemistry and chemoinformatics. It is best known for modeling quantitative structure-property relationship (QSPR) and quantitative structure-activity relationship (QSAR) between molecules~\cite{Todeschini}. The Zagreb index of a graph, $G = (V, E)$, is the sum of the squared degrees of all the nodes in $G$. Mathematically, it is given by
$${\tt Zagreb}(G) = \sum_{v \in V} \bigl(\degree(v)\bigr)^2,$$
where $\degree(v)$ is the degree of node $v$.

Recently, the Zagreb index of several random trees were investigated, such as random recursive trees (RRTs)~\cite{Feng2011} and $b$-ary search trees~\cite{Feng2015}. In this article, we calculate the Zagreb index of three random structures; They are a class of networks extended from RRTs, plain-oriented recursive trees (PORTs), and a class of caterpillars growing in a {\em preferential attachment} manner.

\section{Zagreb index of extended RRTs}

Tree is a popular structure for data storage and sorting in computer science. A rooted tree is a tree in which there is one designated node called {\em root}. The root of a tree is usually thought of as the originator of the tree. A {\em random recursive tree} (RRT) is a non-planer rooted tree such that a node is uniformly chosen from all the nodes in the existing tree as a parent for a new child at each growth step. The children of any parent in a RRT are not ordered. 

The Zagreb index of RRTs was investigated by~\cite{Feng2011}. The exact mean and variance of the index were calculated. They both increase linearly with respect to time $n$. The asymptotic distribution of the Zagreb index (scaled by $n$) was proven to follow a Gaussian law. In this section, we look into the Zagreb index of a class of networks extended from RRTs. This class of networks evolve as follows. At time $n = 1$, there is a total of $m_0 \ge 1$ nodes that are mutually connected by edges. If there is a single node ($m_0 = 1$) at the initial point, it exists as an isolated node, where no self loop is considered. At each subsequent time point, we randomly choose $m \le m_0$ (distinct) nodes from the existing network and connect them with a newcomer by $m$ edges. Our goal is to study the Zagreb index of this class of networks at time $n$, denoted by $U_n$. A RRT appears as a special case of this network by setting $m_0 = m = 1$.

We enumerate all the nodes in $U_n$ in the following way. We label the initial $m_0$ nodes with distinct numbers in $\{1, 2, \ldots, m_0\}$. Before recruiting any child, these $m_0$ nodes are structurally equivalent, so the order of labeling is arbitrary. The child that joins the network at time $n \ge 2$ is labeled with $(m_0 + n - 1)$. Thus, there is a total of $(m_0 + n - 1)$ nodes in $U_n$. For each $j = 1, 2, \ldots, m_0 + n - 1$, let $D_{n, j}$ be the degree of the node labeled with $j$. In addition, let 
$$Z_n = {\tt Zagreb}(U_n) = \sum_{j = 1}^{m_0 + n - 1} D^2_{n, j}$$ 
be the Zagreb index of $U_n$. Note that we will repeatedly use $D$ and $Z$ with proper subscripts as node degrees and the Zagreb index for all kinds of random graphs investigated through this manuscript. In the next proposition, we calculate the expectation of $Z_n$, and develop a weak law as well.

\begin{prop}
	\label{Prop:EZU}
	For $n \ge 1$, the mean of the Zagreb index of $U_n$ is
	$$ \E[Z_n] = \frac{(5m^2 + m)n^2 - 2mm_0(2m - m_0 + 1)n\log{n} + O(n)}{n + m_0 - 1}.$$
	As $n \to \infty$, we have
	$$\frac{Z_n}{n} \convL 5m^2 + m.$$
	This convergence takes place in probability as well.
\end{prop}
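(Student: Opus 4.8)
The plan is to derive a one-step recurrence for $\E[Z_n]$, solve it by telescoping, and then upgrade the resulting convergence of the mean to $L_1$ (hence in-probability) convergence via a crude variance estimate coming from a martingale decomposition. Let $\mathcal{F}_n$ be the $\sigma$-field generated by the first $n$ stages of the growth of the network, and write $N_n = m_0 + n - 1$ for the number of nodes of $U_n$. Passing from $U_n$ to $U_{n+1}$, a newcomer of degree $m$ is attached to a uniformly chosen $m$-subset $S \subseteq \{1, \dots, N_n\}$; for each $j \in S$ the squared-degree contribution of node $j$ increases by $(D_{n,j}+1)^2 - D_{n,j}^2 = 2D_{n,j}+1$, and the newcomer contributes $m^2$. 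Hence
\[
Z_{n+1} = Z_n + m^2 + m + 2\sum_{j \in S} D_{n,j}.
\]
Since $S$ is a uniform $m$-subset, $\Prob[j \in S \mid \mathcal{F}_n] = m/N_n$, so $\E\bigl[\sum_{j\in S} D_{n,j} \mid \mathcal{F}_n\bigr] = (m/N_n)\sum_{j} D_{n,j} = (2m/N_n)\lvert E_n\rvert$, where the edge count $\lvert E_n\rvert = \binom{m_0}{2} + m(n-1)$ is deterministic. Substituting this and using $n-1 = N_n - m_0$ simplifies to
\[
\E[Z_{n+1} \mid \mathcal{F}_n] = Z_n + 5m^2 + m - \frac{2mm_0(2m - m_0 + 1)}{m_0 + n - 1},
\]
and taking expectations gives the same identity with $Z$ replaced by $\E[Z]$.

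Next I would telescope this recurrence from the deterministic initial value $Z_1 = m_0(m_0-1)^2$ (the Zagreb index of $K_{m_0}$, equal to $0$ when $m_0 = 1$), obtaining
\[
\E[Z_n] = Z_1 + (n-1)(5m^2 + m) - 2mm_0(2m - m_0 + 1)\bigl(H_{m_0 + n - 2} - H_{m_0 - 1}\bigr),
\]
with $H_k$ the $k$-th harmonic number. Inserting $H_{m_0+n-2} = \log n + O(1)$, then multiplying numerator and denominator by $n + m_0 - 1$ and absorbing all lower-order terms into an $O(n)$ remainder, puts this in exactly the stated form; this step is purely a matter of bookkeeping, and in particular it gives $\E[Z_n]/n \to 5m^2 + m$.

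Finally, for the weak law the conditional-expectation identity above says precisely that $M_n := Z_n - \E[Z_n]$ is an $(\mathcal{F}_n)$-martingale (trivially square-integrable, since $Z_n \le N_n^3$), with increments $M_{n+1} - M_n = 2\bigl(\sum_{j\in S} D_{n,j} - \E[\sum_{j\in S} D_{n,j} \mid \mathcal{F}_n]\bigr)$. By orthogonality of martingale increments, $\Var[Z_n] = \E[M_n^2] = \sum_{k=1}^{n-1}\E[(M_{k+1}-M_k)^2]$, and each summand is bounded via
\[
\E\bigl[(M_{k+1}-M_k)^2 \mid \mathcal{F}_k\bigr] \le 4\,\E\Bigl[\bigl(\textstyle\sum_{j\in S} D_{k,j}\bigr)^2 \Bigm| \mathcal{F}_k\Bigr] \le 4m\cdot\frac{m}{N_k}Z_k = \frac{4m^2}{N_k}Z_k,
\]
where the last inequality is Cauchy--Schwarz on the $m$-term sum followed by linearity of conditional expectation. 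Taking expectations, $\E[(M_{k+1}-M_k)^2] \le (4m^2/N_k)\,\E[Z_k] = O(1)$ by the first part, whence $\Var[Z_n] = O(n)$. Then
\[
\E\Bigl[\bigl(\tfrac{Z_n}{n} - (5m^2 + m)\bigr)^2\Bigr] \le \frac{2\Var[Z_n]}{n^2} + 2\Bigl(\frac{\E[Z_n]}{n} - (5m^2+m)\Bigr)^2 = O(1/n) \to 0,
\]
so $Z_n/n \convLL 5m^2 + m$; this $L_2$ convergence yields the claimed $L_1$ convergence by Jensen's inequality and convergence in probability by Chebyshev's inequality. The argument involves no deep obstacle: the most tedious point is matching the exact form claimed for $\E[Z_n]$, and the one place demanding care is the correct use of the uniform-$m$-subset attachment rule in the conditional first and second moments.
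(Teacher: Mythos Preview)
Your argument is correct and, for the mean, follows exactly the paper's route: establish the one-step recurrence $\E[Z_{n+1}\mid\mathcal F_n]=Z_n+5m^2+m-\dfrac{2mm_0(2m-m_0+1)}{m_0+n-1}$ via the uniform-subset rule and the deterministic edge count, then telescope from the initial value. Your harmonic-number formula and its repackaging into the stated fraction are both accurate.

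The one genuine difference lies in the weak law. The paper simply asserts $|(n+m_0-1)Z_n-(5m^2+m)n^2|=O_{L_1}(n\log n)$ and divides through; strictly speaking this $L_1$ bound is not justified at that point in the paper and only becomes legitimate once the variance is computed in the subsequent proposition and corollary. Your route is more self-contained: you observe that $M_n=Z_n-\E[Z_n]$ is a martingale because the conditional drift is deterministic, and then bound each squared increment by $\dfrac{4m^2}{N_k}Z_k$ via Cauchy--Schwarz on the $m$-term sum, giving $\Var[Z_n]=O(n)$ directly and hence $L_2$ (a fortiori $L_1$ and in-probability) convergence without appealing to the full second-moment computation. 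This buys you a cleaner logical order at essentially no extra cost, and in fact recovers the correct order of the variance that the paper obtains later by a much longer exact calculation.
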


\begin{proof}
	Let $\field_{n - 1}$ denote the $\sigma$-filed generated by the history of the first $(n - 1)$ stages of the network, and let $\indicator_{n, S}$ denote the event indicating that nodes labeled with the indices in set $S$ are chosen as parents for the new child at time $n$. Upon the insertion of node $n$, an almost-sure relation between $Z_n$ and $Z_{n - 1}$, conditional on $\field_{n - 1}$ and $\indicator_{n, S}$ is given by
	$$Z_{n} = Z_{n - 1} + \sum_{j = s_1}^{s_m} (D_{n, j} + 1)^2 - \sum_{j = s_1}^{s_m} D^2_{n, j} + m^2,$$
	where $S := \{s_1, s_2, \ldots, s_m\}$ is an $m$-long subset of $\{1, 2, \ldots, m_0 + n - 2\}$. We simplify the almost-sure relation to get
	\begin{equation}
	\label{Eq: recZam}
	Z_n = Z_{n - 1} + 2 \sum_{j \in S} D_{n - 1, j} + (m^2 + m).
	\end{equation}
	Taking the average over all possible $S$'s, we obtain
	\begingroup
	\allowdisplaybreaks
	\begin{align*}
	\E[Z_n \given \field_{n - 1}] &= Z_{n - 1} + \frac{2}{{n + m_0 - 2 \choose m}} \sum_{S} \sum_{j \in S} D_{n - 1, j} + (m^2 + m)
	\\ &= Z_{n - 1} + 2 \times \frac{{n + m_0 - 3 \choose m - 1}}{{n + m_0 - 2 \choose m}} \sum_{j = 1}^{n + m_0 - 2} D_{n - 1, j} + (m^2 + m)
	\\ &= Z_{n - 1} + \frac{2m}{n + m_0 - 2} \sum_{j = 1}^{n + m_0 - 2} D_{n - 1, j} + (m^2 + m),
	\end{align*}
	\endgroup
	where the sum $\sum_{j = 1}^{n + m_0 - 2} D_{n - 1, j}$ is not random; It is equal to $\bigl(m_0(m_0 - 1) + 2m(n - 1)\bigr)$. We thus can take another expectation with respect to $\field_{n - 1}$ to get a recurrence for $\E[Z_n]$, which is given by
	\begin{equation}
	\label{Eq:recU}
	\E[Z_n] = \E[Z_{n - 1}] + \frac{m \bigl( (5m + 1)n + 2m_0^2 + (m - 1)m_0 - 2(5m + 1)\bigr)}{n + m_0 - 2}.
	\end{equation}
	We solve this recurrence with the initial condition $\E[Z_1] = Z_1 = m_0(m_0 - 1)^2$, and get the result stated in the proposition.
	
	In what follows, we have
	$$|(n + m_0 - 1) Z_n - (5m^2 + m)n^2| = O_{L_1}(n \log{n}).$$
	Divide by $n^2$ on both sides, and let $n$ go to infinity. We obtain an $L_1$ converge for $Z_n / n$, as well as an in-probability convergence required for the weak law.
\end{proof}

The computation of the second moment of $Z_n$ is based on squaring the almost-sure relation of $Z_n$ presented in Equation~(\ref{Eq: recZam}). That is
\begin{align}
Z_n^2 &= Z_{n - 1}^2 + 4\left(\sum_{j \in S} D_{n - 1, j}\right)^2 + (m^2 + m)^2 +  4 Z_{n - 1} \sum_{j \in S} D_{n - 1, j} \nonumber
\\ &\qquad{}+ 2(m^2 + m)Z_{n - 1} + 4(m^2 + m) \sum_{j \in S} D_{n - 1, j}. \label{Eq:recZsq}
\end{align}

As done in the proof of Proposition~\ref{Prop:EZU}, we tend to take the expectation with respect to $\indicator_{n, S}$, then to take another expectation with respect to $\field_{n - 1}$, and ultimately to get a recurrence for the second moment of $Z_n$. Before implementing this strategy, we take the most complex term in Equation~(\ref{Eq:recZsq}) out and simplify it separately as follows:
$$
\sum_{S}\left(\sum_{j \in S} D_{n - 1, j}\right)^2 = \sum_{S}\left(\sum_{j \in S} D^2_{n - 1, j} + 2 \sum_{j \neq k \in S}D_{n - 1, j}D_{n - 1, k}\right).$$
The first part is simple. It is
$$ \sum_{S} \sum_{j \in S} D^2_{n - 1, j} = {n + m_0 - 3 \choose m - 1} Z_{n - 1}.$$
The second part is
\begingroup
\allowdisplaybreaks
\begin{align*}
&2\sum_S\sum_{j \neq k \in S} D_{n - 1, j} D_{n - 1, k} 
\\&\quad= 2 {n + m_0 - 4 \choose m - 2}\sum_{j \neq k} D_{n - 1, j} D_{n - 1, k}
\\&\quad= {n + m_0 - 4 \choose m - 2} \sum_{j = 1}^{n + m_0 - 2} D_{n - 1, j} \left(\sum_{j = 1}^{n + m_0 - 2} D_{n - 1, j}  - D_{n - 1, j}\right)
\\&\quad= {n + m_0 - 4 \choose m - 2} \sum_{j = 1}^{n + m_0 - 2} D_{n - 1, j} \bigl(m_0(m_0 - 1) + 2m(n - 2) - D_{n - 1, j}\bigr)
\\ &\quad =  {n + m_0 - 4 \choose m - 2} \left(\bigl(m_0(m_0 - 1) + 2m(n - 2)\bigr)^2 - Z_{n - 1} \right)
\end{align*}
\endgroup

We are now ready to derive the second moment of $Z_n$, the result of which is presented in the next proposition.

\begin{prop}
	\label{Prop:EZsqU}
	For $n \ge 1$, the second moment of the Zagreb index of $U_n$ is
	$$\E\left[Z^2_n\right] = \frac{(5m^2 + m)^2 n^5 - 4m^2m_0(5m + 1)(2m - m_0 + 1)n^4\log{n} + O\left(n^4\right)}{(n + m_0 - 2)(n + m_0 - 1)^2}.$$
\end{prop}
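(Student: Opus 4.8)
The plan is to run the same recurrence-and-solve scheme used for Proposition~\ref{Prop:EZU}, one moment higher. The inputs are the squared almost-sure identity, Equation~(\ref{Eq:recZsq}); the two auxiliary simplifications of $\sum_{S}\bigl(\sum_{j\in S}D_{n-1,j}\bigr)^2$ displayed just above the proposition; and the deterministic value $\sum_{j=1}^{n+m_0-2}D_{n-1,j} = m_0(m_0-1)+2m(n-2)$. First I would take $\E[\,\cdot\mid\field_{n-1}]$ of Equation~(\ref{Eq:recZsq}) by averaging over the $\binom{n+m_0-2}{m}$ equally likely choices of $S$. Each of the six terms collapses: the term $4\bigl(\sum_{j\in S}D_{n-1,j}\bigr)^2$ becomes, via the two auxiliary identities, an affine function of $Z_{n-1}$ with rational-in-$n$ coefficients; the term $4Z_{n-1}\sum_{j\in S}D_{n-1,j}$ turns into $\frac{4m}{n+m_0-2}\bigl(m_0(m_0-1)+2m(n-2)\bigr)Z_{n-1}$ and, likewise, $4(m^2+m)\sum_{j\in S}D_{n-1,j}$ into a deterministic function of $n$ (both using $\E[\sum_{j\in S}D_{n-1,j}\mid\field_{n-1}] = \frac{m}{n+m_0-2}\sum_j D_{n-1,j}$, exactly as in the proof of Proposition~\ref{Prop:EZU}); and $2(m^2+m)Z_{n-1}$ together with $(m^2+m)^2$ are already $\field_{n-1}$-measurable. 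The outcome is a conditional identity of the form $\E[Z_n^2\mid\field_{n-1}] = Z_{n-1}^2 + \alpha_n Z_{n-1} + \beta_n$, where $\alpha_n$ and $\beta_n$ are explicit rational functions of $n$ with $\alpha_n = 2(5m^2+m) + O(1/n)$ and $\beta_n = O(1)$.

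Next I would take another expectation, substitute $\E[Z_{n-1}]$ from Proposition~\ref{Prop:EZU}, and obtain the plain additive recurrence $\E[Z_n^2] = \E[Z_{n-1}^2] + g(n)$, in which $g(n) = \alpha_n\E[Z_{n-1}] + \beta_n$ is a rational function of $n$ plus a single $\log(n-1)$-carrying term inherited from the $n\log n$ contribution to $\E[Z_{n-1}]$. Solving is then pure telescoping from $\E[Z_1^2] = Z_1^2 = m_0^2(m_0-1)^4$, so $\E[Z_n^2] = m_0^2(m_0-1)^4 + \sum_{k=2}^n g(k)$. Since $\alpha_k \to 2(5m^2+m)$ and $\E[Z_{k-1}] \sim (5m^2+m)(k-1)$, the increment $g(k)$ is asymptotic to $2(5m^2+m)^2 k$, whose partial sums produce the leading term $(5m^2+m)^2 n^2$; the only source of the next-order $n\log n$ piece is the product of the constant part of $\alpha_k$ with the $\log(k-1)$ term of $\E[Z_{k-1}]$, and $\sum_{k\le n}\log(k-1) = n\log n + O(n)$ pins its coefficient to $-4m^2m_0(5m+1)(2m-m_0+1)$. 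Clearing the remaining harmonic- and log-weighted sums and rewriting everything over the common denominator $(n+m_0-2)(n+m_0-1)^2$ gives the stated formula, all contributions of smaller order being absorbed into the $O(n^4)$ numerator term.

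The only real difficulty is the bookkeeping, not any conceptual step: Equation~(\ref{Eq:recZsq}) has six terms, several of which fan out into three or four subterms once $S$ is averaged out and the first moment is substituted, and the closing summation mixes $\sum 1/(k+c)$, $\sum k/(k+c)$ and their $\log$-weighted analogues, each of which must be expanded far enough to fix the $n^5$ and $n^4\log n$ coefficients while safely discarding everything that is $O(n^4)$ in the numerator (equivalently $O(n)$ in the index itself). A convenient internal check while doing this is that $\E[Z_n^2] - (\E[Z_n])^2$ should come out nonnegative with its $n^2$ terms cancelling, consistent with the Zagreb index fluctuating on a scale smaller than its mean.
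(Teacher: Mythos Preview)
Your proposal is correct and follows essentially the same route as the paper: average Equation~(\ref{Eq:recZsq}) over $S$ to obtain $\E[Z_n^2\mid\field_{n-1}]=Z_{n-1}^2+\alpha_n Z_{n-1}+\beta_n$ (the paper's $C_1,C_2$), take a second expectation, substitute $\E[Z_{n-1}]$ from Proposition~\ref{Prop:EZU}, and solve the resulting additive recurrence from $\E[Z_1^2]=m_0^2(m_0-1)^4$. Your write-up is in fact more explicit than the paper's about how the leading $(5m^2+m)^2 n^2$ and the $n\log n$ correction emerge from the telescoped sum, but the method is identical.
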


\begin{proof}
	Recall the squared almost-sure relation in Equation~(\ref{Eq:recZsq}). Take the expectation with respect to $\indicator_{n, S}$ to get
	$$
	\E\left[Z_n^2 \given \field_{n - 1}\right] = Z_{n - 1}^2 + C_1(n, m, m_0)Z_{n - 1} + C_2(n, m, m_0),
	$$
	where $C_1$ and $C_2$ are two constant functions (free of $Z_{n}$) depending only on $n$, $m$ and $m_0$. We have the exact expressions of $C_1$ and $C_2$, but they are too lengthy to report in the manuscript. Taking another expectation with respect to $\field_{n - 1}$ and plugging in $\E[Z_n]$ derived in Proposition~\ref{Prop:EZU}, we obtain a recurrence for the second moment of $Z_n$. Solving the recurrence with initial condition $\E\left[Z^2_1\right] = Z_1^2 = m_0^2(m_0 - 1)^4$, we get the result stated in the proposition. 
\end{proof}

Although we only present the first two leading terms of $\E\left[Z_n^2\right]$ in Proposition~\ref{Prop:EZsqU}, we obtain the exact expressions of a few more terms in the calculation. These terms are needed to determine the order of the leading term of the variance of $Z_n$. These terms are available upon request by the readers. In the next corollary, we give the variance of $Z_n$, computed by taking the difference between the second moment and the squared first moment of $Z_n$.

\begin{corollary}
	\label{Cor:VarZ}
	For $n \ge 1$, the variance of the Zagreb index of $U_n$ is
	$$
	\Var[Z_n] = 4m^2(m + 1)n + 4m^2m_0(m_0 - 2m - 1) \log^2{n} + O\left(\log{n}\right).
	$$
\end{corollary}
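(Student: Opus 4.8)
The plan is to read $\Var[Z_n]$ off the defining identity $\Var[Z_n] = \E[Z_n^2] - \bigl(\E[Z_n]\bigr)^2$, feeding in the expansion of $\E[Z_n^2]$ from Proposition~\ref{Prop:EZsqU} and that of $\E[Z_n]$ from Proposition~\ref{Prop:EZU}. Abbreviating $A := 5m^2 + m = m(5m+1)$ and $B := 2mm_0(2m - m_0 + 1)$, Proposition~\ref{Prop:EZU} gives $\E[Z_n] = \bigl(An^2 - Bn\log n + O(n)\bigr)/(n+m_0-1)$, so
\begin{equation*}
\bigl(\E[Z_n]\bigr)^2 = \frac{(n+m_0-2)\bigl(An^2 - Bn\log n + O(n)\bigr)^2}{(n+m_0-2)(n+m_0-1)^2},
\end{equation*}
which carries the same denominator $(n+m_0-2)(n+m_0-1)^2$ as $\E[Z_n^2]$. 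Subtracting the two numerators produces a single rational function of $n$, and the remaining task is to expand this quotient to the precision claimed.

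The subtraction is governed by two built-in cancellations. The $n^5$ terms cancel because the leading numerator coefficient $(5m^2+m)^2 = A^2$ of $\E[Z_n^2]$ matches the leading term $A^2 n^5$ of $(n+m_0-2)(An^2)^2$; and the $n^4\log n$ terms cancel because $-4m^2m_0(5m+1)(2m-m_0+1) = -2AB$, which is exactly the negative of twice the cross term obtained by squaring $An^2 - Bn\log n$ and multiplying by $n$. After these cancellations the numerator of $\Var[Z_n]$ is of order $n^4$, together with an $n^3\log^2 n$ contribution assembled from the $n^3\log^2 n$ term in the numerator of $\E[Z_n^2]$ and the $B^2 n^2\log^2 n$ term of $\bigl(\E[Z_n]\bigr)^2$ (the latter multiplied by the extra factor $n+m_0-2$). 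Dividing by the cubic denominator $(n+m_0-2)(n+m_0-1)^2 = n^3 + O(n^2)$ then leaves a linear leading term, a $\log^2 n$ term, and an $O(\log n)$ remainder; computing the surviving coefficients yields $4m^2(m+1)n$ and $4m^2m_0(m_0-2m-1)\log^2 n$.

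The main obstacle is accounting precision rather than any new idea: because the $n^5$ and $n^4\log n$ terms cancel, the two Propositions exactly as displayed are not sufficient. One needs the exact $n^4$, $n^3\log^2 n$, $n^3\log n$ and $n^3$ coefficients of the numerator of $\E[Z_n^2]$ (the further terms the authors note are available on request) and the exact coefficient of $n$ in the numerator of $\E[Z_n]$, since, for instance, the $n^4$ coefficient of the numerator of $\Var[Z_n]$ equals the $n^4$ coefficient of the numerator of $\E[Z_n^2]$ minus $(m_0-2)A^2$ minus $2A$ times the $n$ coefficient of the numerator of $\E[Z_n]$, with a similar bookkeeping for the $\log^2 n$ term. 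To guard against an arithmetic slip in this delicate cancellation I would also run an independent derivation through a recurrence for $\Var[Z_n]$ directly: from Equation~(\ref{Eq: recZam}), $\E[Z_n\mid\field_{n-1}] - Z_{n-1}$ is deterministic, so the conditional-variance decomposition gives
\begin{equation*}
\Var[Z_n] = \Var[Z_{n-1}] + 4\,\E\!\left[\Var\!\left(\textstyle\sum_{j\in S} D_{n-1,j} \given \field_{n-1}\right)\right],
\end{equation*}
and the inner conditional variance is the finite-population sampling variance of the total over a uniform $m$-subset, namely $\frac{m(N-m)}{N(N-1)}\bigl(Z_{n-1} - N^{-1}\sigma_{n-1}^2\bigr)$ with $N = n+m_0-2$ and $\sigma_{n-1}$ the deterministic degree sum of $U_{n-1}$. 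Substituting $\E[Z_{n-1}]$ from Proposition~\ref{Prop:EZU}, telescoping from $\Var[Z_1] = 0$, and extracting the dominant part of the resulting sum reproduces the stated formula while sidestepping the cancellation entirely.
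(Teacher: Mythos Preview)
Your primary approach---computing $\Var[Z_n]=\E[Z_n^2]-\bigl(\E[Z_n]\bigr)^2$ from Propositions~\ref{Prop:EZU} and~\ref{Prop:EZsqU}, observing that the $n^5$ and $n^4\log n$ numerator terms cancel, and noting that one therefore needs the next few (unreported) coefficients of $\E[Z_n^2]$---is exactly what the paper does. The paper states the corollary without a separate proof, saying only that the variance is ``computed by taking the difference between the second moment and the squared first moment'' and that the extra terms required for this are ``available upon request''; your write-up simply makes the mechanics of that subtraction explicit.

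Your secondary route, the conditional-variance recurrence
\[
\Var[Z_n]=\Var[Z_{n-1}]+4\,\E\!\left[\Var\!\left(\textstyle\sum_{j\in S}D_{n-1,j}\,\Big|\,\field_{n-1}\right)\right]
\]
together with the SRSWOR variance formula, is not in the paper and is a genuinely different derivation. It has the advantage of bypassing the two-order cancellation entirely: one only needs $\E[Z_{n-1}]$ (already known to the required precision from Proposition~\ref{Prop:EZU}) rather than the hidden higher-order terms of $\E[Z_n^2]$, and the leading $4m^2(m+1)n$ falls out directly from $\frac{4m(N-m)}{N(N-1)}\E[Z_{n-1}]\sim \frac{4m}{n}\cdot(5m^2+m)n$ summed over $n$. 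The paper's approach, by contrast, reuses the second-moment computation already done for Proposition~\ref{Prop:EZsqU}, so it requires no new ingredients but does rely on bookkeeping that the reader cannot verify from the displayed formulas alone.
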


In~\cite{Feng2011}, the authors proved that the variance of the Zagreb index of RRT is asymptotically equal to $8n$, which is the special case of Corollary~\ref{Cor:VarZ} ($m = 1$). According to Corollary~\ref{Cor:VarZ}, we find that the variance of the Zagreb index of $U_n$ is linear in $n$, and its asymptotic value does not depend on $m_0$. In the next corollary, we show that $Z_n/n$ converges to $5m^2 + m$ in $L_2$-space (stronger than the $L_1$ convergence and the in-probability convergence presented in Proposition~\ref{Prop:EZU}).

\begin{corollary}
	As $n \to \infty$, we have
	$$\frac{Z_n}{n} \convLL 5m^2 + m.$$
\end{corollary}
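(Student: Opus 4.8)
The plan is to deduce $L_2$ convergence directly from the first two moments already computed, via the bias--variance decomposition. Writing $c := 5m^2 + m$, we have
$$\E\left[\left(\frac{Z_n}{n} - c\right)^2\right] = \Var\left[\frac{Z_n}{n}\right] + \left(\E\left[\frac{Z_n}{n}\right] - c\right)^2 = \frac{\Var[Z_n]}{n^2} + \left(\frac{\E[Z_n]}{n} - c\right)^2,$$
so it suffices to show that both terms on the right-hand side tend to $0$ as $n \to \infty$. All the substantive work has in fact already been done in Proposition~\ref{Prop:EZsqU} and Corollary~\ref{Cor:VarZ}; what remains is bookkeeping on the orders of the error terms.

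For the variance term, Corollary~\ref{Cor:VarZ} gives $\Var[Z_n] = 4m^2(m+1)n + 4m^2 m_0(m_0 - 2m - 1)\log^2 n + O(\log n)$, and dividing by $n^2$ makes every summand $O(1/n)$, $O(\log^2 n / n^2)$, or $O(\log n / n^2)$, each of which vanishes. For the bias term, Proposition~\ref{Prop:EZU} yields
$$\frac{\E[Z_n]}{n} = \frac{(5m^2 + m)n - 2mm_0(2m - m_0 + 1)\log n + O(1)}{n + m_0 - 1},$$
and a routine estimate gives $\E[Z_n]/n - c = O(\log n / n) \to 0$, so the squared bias is $O((\log n / n)^2)$.

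Combining the two estimates, $\E\big[(Z_n/n - c)^2\big] = O(1/n) + O((\log n/n)^2) \to 0$, which is precisely the asserted $L_2$ convergence $Z_n/n \convLL 5m^2 + m$; as a consistency check this is stronger than, and hence implies, the $L_1$ and in-probability statements of Proposition~\ref{Prop:EZU}. There is no real obstacle here: the only point requiring care is to confirm that the unspecified $O(n)$ term inside $\E[Z_n]$ and the $O(n^4)$ term inside $\E[Z_n^2]$ (equivalently, the $O(\log n)$ remainder in $\Var[Z_n]$) are genuinely of smaller order than the leading terms after normalization by $n$ and $n^2$ respectively — which they are — so the decomposition closes with no additional input.
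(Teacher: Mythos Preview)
Your proof is correct and follows essentially the same approach as the paper: both use the bias--variance decomposition $\E[(Z_n/n - c)^2] = \Var[Z_n]/n^2 + (\E[Z_n]/n - c)^2$ and then invoke Proposition~\ref{Prop:EZU} and Corollary~\ref{Cor:VarZ} to show each summand is $o(1)$. The paper's write-up is slightly terser (working with $Z_n - cn$ rather than $Z_n/n$ and stating the decomposition as an inequality), but the argument is identical.
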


\begin{proof}
	According to the asymptotic mean and variance of $Z_n$, we have
	\begin{align*}
	\E\left[\bigl|Z_n - (5m^2 + m)n\bigr|^2\right] &= \E\left[\bigl(Z_n - \E[Z_n] + \E[Z_n] - (5m^2 + m)n\bigr)^2\right]
	\\ &\le \Var[Z_n] + O\left(\log^2{n}\right)
	\\ &= O(n),
	\end{align*}
	which completes the proof.
\end{proof}

As both the mean and the variance of $Z_n$ are linear in $n$, we suspect that the limiting distribution of $Z_n$ scaled by $n$ is normal for general $U_n$, not just for the class of RRTs~\cite{Feng2011}. To prove the conjecture, our strategy is to apply a {\em Martingale Central Limit Theorem} (MCLT). According to Equation~(\ref{Eq:recU}), $\{Z_n\}_n$ is not a martingale. We consider the following transformation such that the transformed array is a martingale.

\begin{lemma}
	\label{Lem:martingale}
	For $n \ge 1$, the sequence 
	$$M_n = Z_n - \frac{(5m^2 + m)n^2 + O(n \log{n})}{n + m_0 - 1}$$
	is a martingale.
\end{lemma}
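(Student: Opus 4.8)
The plan is to observe that the centering sequence in the statement is, up to the stated error term, precisely $\E[Z_n]$ as computed in Proposition~\ref{Prop:EZU}, so the lemma reduces to the assertion that $Z_n - \E[Z_n]$ is an $\{\field_n\}$-martingale. First I would revisit the conditional-expectation identity already established in the proof of Proposition~\ref{Prop:EZU}, namely
$$\E[Z_n \given \field_{n-1}] = Z_{n-1} + \frac{2m}{n+m_0-2}\bigl(m_0(m_0-1)+2m(n-1)\bigr) + (m^2+m),$$
and emphasize the crucial structural fact that the one-step increment $\E[Z_n \given \field_{n-1}] - Z_{n-1}$ is a \emph{deterministic} function of $n$ — call it $a_n$ — because the total degree $\sum_{j} D_{n-1,j}$ is non-random. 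Taking a further expectation yields $\E[Z_n] - \E[Z_{n-1}] = a_n$, which lets me replace $a_n$ by this difference.

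Next, I would set $M_n := Z_n - \E[Z_n]$ and compute, for $n \ge 2$,
$$\E[M_n \given \field_{n-1}] = \E[Z_n \given \field_{n-1}] - \E[Z_n] = \bigl(Z_{n-1} + a_n\bigr) - \E[Z_n] = Z_{n-1} - \E[Z_{n-1}] = M_{n-1},$$
which is the martingale property. Adaptedness is immediate since $Z_n$ is $\field_n$-measurable, and integrability is trivial because $0 \le Z_n \le (n+m_0-1)^3$ deterministically (each of the $n+m_0-1$ degrees is at most $n+m_0-2$). Finally I would substitute the closed form $\E[Z_n] = \frac{(5m^2+m)n^2 - 2mm_0(2m-m_0+1)n\log n + O(n)}{n+m_0-1}$ from Proposition~\ref{Prop:EZU} and absorb the $n\log n$ term into the $O(n\log n)$ bucket, recovering exactly the expression $M_n = Z_n - \frac{(5m^2+m)n^2 + O(n\log n)}{n+m_0-1}$ in the statement.

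There is no genuine obstacle here; the only point requiring care is that the compensator must be deterministic, which rests entirely on the non-randomness of the total degree of $U_{n-1}$ — this is what makes the increment predictable and permits the centering sequence to be the deterministic function $\E[Z_n]$. The sole bookkeeping subtlety is to confirm that writing the error term as $O(n\log n)$ in the lemma is consistent with (indeed slightly coarser than) the explicit $-2mm_0(2m-m_0+1)n\log n + O(n)$ term appearing in Proposition~\ref{Prop:EZU}.
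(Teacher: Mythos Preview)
Your proposal is correct and is essentially the same argument as the paper's: the paper posits $M_n = Z_n + \beta_n$, derives the recurrence $\beta_n - \beta_{n-1} = -C_3(n,m,m_0)$ from the martingale property, and solves it with $\beta_1 = 0$, which amounts precisely to your observation that $\beta_n = -\E[Z_n]$ (up to the additive constant $Z_1$) because the conditional increment is deterministic. Your formulation is arguably cleaner since it identifies the compensator as $\E[Z_n]$ directly rather than rediscovering it through the recurrence.
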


\begin{proof}
	Given a sequence $\{\beta_n\}_n$, consider $M_n := Z_n + \beta_n$ such that $\{M_n\}_n$ is a martingale. We retrieve $\beta_n$ based off the fundamental martingale property, i.e.,
	\begin{align*}
	\E[Z_n + \beta_n \given \field_{n - 1}] &= Z_{n - 1} + \frac{(5m + 1)n + 2m_0^2 + (m - 1)m_0 - 2(5m + 1)}{n + m_0 - 2} 
	\\ &\qquad{}+ \beta_n
	\\ &= Z_{n - 1} + \beta_{n - 1}
	\end{align*}
	We thus obtain a recurrence for $\beta_n$. We solve the recurrence with an arbitrary choice of the initial value of $\beta_n$, e.g., $\beta_1 = 0$, to get the result stated in the lemma.
\end{proof}

There are different forms of MCLTs listed in~\cite{Hall}, based off different sets of conditions. We choose a MCLT that requires a {\em conditional Lindeberg's condition} and a {\em conditional variance condition} for our proof.

\begin{lemma}
	\label{Lem：Lindeberg}
	The conditional Lindeberg's condition is given by
	$$U_n := \sum_{j = 1}^{n} \E\left[\left(\frac{\nabla M_j}{\sqrt{n}}\right)^2 \indicator_{\{|\nabla M_j / \sqrt{n}| > \varepsilon\}} \, \Bigg{|} \, \field_{j - 1}\right] \convP 0.$$
\end{lemma}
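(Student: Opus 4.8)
The plan is to exploit the fact that, conditionally on $\field_{j-1}$, the martingale increment $\nabla M_j := M_j - M_{j-1}$ is \emph{bounded}, with a bound governed by the maximum degree present in $U_{j-1}$. Combining Equation~(\ref{Eq: recZam}) with the definition of the correction sequence $\beta_n$ from the proof of Lemma~\ref{Lem:martingale}, one has
$$\nabla M_j = 2\sum_{k \in S} D_{j-1,k} + (m^2 + m) + (\beta_j - \beta_{j-1}),$$
where $S$ is the (random) set of $m$ parents chosen at step $j$. The deterministic correction $(m^2 + m) + (\beta_j - \beta_{j-1})$ is a convergent sequence in $j$, hence bounded by an absolute constant; and since $|S| = m$, this gives the almost sure bound $|\nabla M_j| \le 2m\Delta_{j-1} + C$ given $\field_{j-1}$, where $\Delta_{j-1} := \max_k D_{j-1,k}$ and $C$ does not depend on $j$.

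First I would use this bound to collapse $U_n$. Fix $\varepsilon > 0$. For a given $j$, if $2m\Delta_{j-1} + C \le \varepsilon\sqrt{n}$ then $\{|\nabla M_j / \sqrt{n}| > \varepsilon\}$ is a $\field_{j-1}$-null event, so the $j$th summand of $U_n$ vanishes; otherwise I bound that summand crudely by $\E\bigl[(\nabla M_j / \sqrt{n})^2 \given \field_{j-1}\bigr] \le (2m\Delta_{j-1} + C)^2 / n$. Since node degrees never decrease, $\Delta_{j-1} \le \Delta_n$ for every $j \le n$, so summing over $j$ yields
$$U_n \le (2m\Delta_n + C)^2 \, \indicator_{\{2m\Delta_n + C > \varepsilon\sqrt{n}\}}.$$
Consequently $\Prob[U_n \ne 0] \le \Prob\bigl[\Delta_n > (\varepsilon\sqrt{n} - C) / (2m)\bigr]$, and it remains only to verify that $\Delta_n / \sqrt{n} \convP 0$.

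To control $\Delta_n$ I would use the independence built into the attachment rule. Conditionally on the history before step $i$, any fixed existing node $k$ is one of the $m$ new parents with probability $m / (i + m_0 - 2)$, independently of $\field_{i-1}$; hence $D_{n,k}$ equals its (constant) initial degree plus a sum of \emph{independent} Bernoulli variables with parameters $m / (i + m_0 - 2)$. Thus $\E[D_{n,k}] = O(\log{n})$, a Chernoff bound furnishes an exponentially small upper tail, and a union bound over the $n + m_0 - 1$ nodes gives $\Prob[\Delta_n > c\log{n}] \to 0$ for a suitable constant $c$ --- far more than the required $\Delta_n / \sqrt{n} \convP 0$. (Classical maximum-degree estimates for recursive trees could be cited instead.) Plugging this into the previous display gives $\Prob[U_n \ne 0] \to 0$, and hence $U_n \convP 0$.

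The only genuine obstacle is the maximum-degree estimate of the last step; everything else is bookkeeping. That estimate is, however, routine once one notices that the parent sets chosen at distinct steps are mutually independent, so it reduces to Chernoff plus a union bound. Indeed, even the cheap bound $\Delta_n^2 \le Z_n$ combined with $\E[Z_n] = O(n)$ from Proposition~\ref{Prop:EZU} already yields $\Delta_n = O_P(\sqrt{n})$, leaving only a logarithmic gap to close.
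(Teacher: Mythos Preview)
Your argument is correct and follows the same route as the paper: bound $|\nabla M_j|$ by a constant multiple of the maximum degree $\Delta_{j-1}$ plus $O(1)$, then use the logarithmic growth of $\Delta_n$ to force the indicator $\indicator_{\{|\nabla M_j|>\varepsilon\sqrt{n}\}}$ to vanish for large $n$. The only cosmetic difference is that the paper cites an analog of the Devroye--Lu strong law for the maximum degree to obtain $\Delta_n=O(\log n)$ almost surely (yielding $U_n\to 0$ a.s.), whereas you supply a self-contained Chernoff-plus-union-bound argument for the same estimate.
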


\begin{proof}
	By the construction of the martingale, we have
	\begin{align*}
	|\nabla M_j| &= \bigl|Z_j + \beta_j - (Z_{j - 1} + \beta_{j - 1})\bigr|
	\\ &\le |Z_j - Z_{j - 1}| + \frac{(5m + 1)j + O(1)}{j + m_0 - 2}
	\\ &\le 2 \max_S \sum_{i \in S} D_{j - 1, i} + m^2 + m + \frac{(5m + 1)j + O(1)}{j + m_0 - 2}
	\\ &= O(\log{n}).
	\end{align*}
	The bound for the maximum degree of a node is obtained by an analog to the strong law developed in~\cite{Devroye}. Therefore, $|\nabla M_j / \sqrt{n}|$ is uniformly bounded for all $j$. In other words, for any $\varepsilon > 0$, there exists $n_0(\varepsilon) > 0$ such that the sets $\{|\nabla M_j / \sqrt{n}| > \varepsilon\}$ are empty for all $n > n_0(\varepsilon)$. In what follows, we conclude that $U_n$ converges to $0$ almost surely, which is stronger than the in-probability convergence required for the condition.
\end{proof}

\begin{lemma}
	\label{Lem:variance}
	The conditional variance condition is given by
	$$V_n := \sum_{j = 1}^{n} \E\left[\left(\frac{\nabla M_j}{\sqrt{n}}\right)^2 \, \Bigg{|} \, \field_{j - 1}\right] \convP \eta^2,$$
	where $\eta^2$ is a random variable that is either finite or converges almost surely. Particularly for our case, $\eta^2$ is equal to $4m^3 + 4m^2$.
\end{lemma}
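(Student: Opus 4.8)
The plan is to reduce $\E[(\nabla M_j)^2 \given \field_{j-1}]$ to a conditional variance that we can evaluate in closed form from quantities already computed in the paper, identify its term‑wise limit, and then control the resulting Cesàro average. The first move is to rewrite the increment: combining the almost‑sure recurrence~(\ref{Eq: recZam}) with the recurrence defining $\beta_n$ in the proof of Lemma~\ref{Lem:martingale}, the two deterministic pieces cancel, and
$$\nabla M_j = \Bigl(2\sum_{i \in S}D_{j-1,i} + (m^2+m)\Bigr) - \E\bigl[\,2\sum_{i\in S}D_{j-1,i}+(m^2+m) \bigm| \field_{j-1}\bigr] = X_j - \E[X_j \given \field_{j-1}],$$
where $S = S_j$ is the random $m$‑set of parents chosen at step $j$ and $X_j := 2\sum_{i\in S}D_{j-1,i}$. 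Thus $\nabla M_j$ is manifestly a martingale difference, $\E[(\nabla M_j)^2\given\field_{j-1}] = \Var(X_j\given\field_{j-1})$, and $V_n = n^{-1}\sum_{j=1}^{n}\Var(X_j\given\field_{j-1})$.

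Next I would compute $\Var(X_j\given\field_{j-1})$ exactly. Writing $q := j+m_0-2$ for the size of $U_{j-1}$ and $\sigma := m_0(m_0-1)+2m(j-2)$ for its (deterministic) degree sum, averaging $X_j$ and $X_j^2$ over the $\binom{q}{m}$ equally likely choices of $S$ gives $\E[X_j\given\field_{j-1}] = \frac{2m}{q}\sigma$ and $\E[X_j^2\given\field_{j-1}] = 4\bigl(\frac{m}{q}Z_{j-1} + \frac{m(m-1)}{q(q-1)}(\sigma^2 - Z_{j-1})\bigr)$, the latter being exactly the identity displayed just before Proposition~\ref{Prop:EZsqU}, divided by $\binom{q}{m}$. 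Hence
$$\E[(\nabla M_j)^2\given\field_{j-1}] = \frac{4m}{q}Z_{j-1} + \frac{4m(m-1)}{q(q-1)}\bigl(\sigma^2 - Z_{j-1}\bigr) - \frac{4m^2}{q^2}\sigma^2,$$
an affine function of $Z_{j-1}$ whose $Z_{j-1}$‑coefficient is $O(1/j)$. Since $q \sim j$, $\sigma/j \to 2m$, and $Z_{j-1}/j \to 5m^2+m$ (Proposition~\ref{Prop:EZU} together with the $L_2$ convergence $Z_n/n \convLL 5m^2+m$ established above), the right‑hand side converges term‑wise to $4m(5m^2+m) + \bigl(4m(m-1)-4m^2\bigr)(2m)^2 = 4m^3+4m^2$, which identifies the candidate $\eta^2 = 4m^3 + 4m^2 = 4m^2(m+1)$.

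The substantive step is upgrading this to $V_n \convP 4m^3+4m^2$, and I would carry it out in $L_2$. For the mean: $M$ is a martingale with $M_1 = Z_1$ deterministic and $\beta_n$ deterministic, so $\E[V_n] = n^{-1}\sum_j \E[(\nabla M_j)^2] = n^{-1}\Var[M_n] + o(1) = n^{-1}\Var[Z_n] + o(1) \to 4m^2(m+1)$ by Corollary~\ref{Cor:VarZ}. For the variance: only the $Z_{j-1}$‑term of the display above is random; calling its coefficient $\gamma_j = O(1/j)$ and using Cauchy--Schwarz together with $\Var[Z_j] = O(j)$ from Corollary~\ref{Cor:VarZ},
$$\Var[V_n] = \frac{1}{n^2}\sum_{j,k=1}^{n}\gamma_j\gamma_k\,\mathrm{Cov}(Z_{j-1},Z_{k-1}) \;\le\; \frac{1}{n^2}\sum_{j,k=1}^{n}|\gamma_j|\,|\gamma_k|\,\sqrt{\Var[Z_{j-1}]\,\Var[Z_{k-1}]} = O\!\left(\frac{1}{n^2}\Bigl(\sum_{j=1}^{n}\frac{1}{\sqrt{j}}\Bigr)^{\!2}\right) = O(1/n).$$
Therefore $V_n \convLL 4m^2(m+1)$, which in particular gives $V_n \convP 4m^3+4m^2$; the constant limit $\eta^2 = 4m^3+4m^2$ trivially "converges almost surely''. (Alternatively, one can first promote $Z_n/n \to 5m^2+m$ to almost‑sure convergence — valid because $M_n/n \to 0$ a.s.\ whenever $\sum_n \E[(\nabla M_n)^2]/n^2 < \infty$ — and then conclude by applying the ordinary Cesàro theorem path‑wise to $\Var(X_j\given\field_{j-1})$.)

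The main obstacle I anticipate is exactly this last passage: the conditional variances $\Var(X_j\given\field_{j-1})$ are random and converge only in probability term‑wise, so a naive Cesàro argument does not suffice; taming the accumulated fluctuations requires either the covariance bound above (which leans on the sharp $\Var[Z_n]=O(n)$ of Corollary~\ref{Cor:VarZ}) or the almost‑sure strong law for $Z_n/n$. Everything else is bookkeeping with the binomial ratios already recorded in the paper, and the agreement of $\eta^2$ with the leading coefficient $4m^2(m+1)$ of $\Var[Z_n]$ serves as a reassuring consistency check.
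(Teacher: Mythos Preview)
Your argument is correct and, at its core, follows the same route as the paper: both compute $\E[(\nabla M_j)^2\mid\field_{j-1}]$, find it is an affine function of $Z_{j-1}$ with $O(1/j)$ leading coefficient, and then use $Z_{j-1}/j\to 5m^2+m$ to identify the limit $4m^3+4m^2$. The paper arrives at the same affine expression by expanding $(Z_j-Z_{j-1}+\beta_j-\beta_{j-1})^2$ into three pieces and using the formulas for $\E[Z_j\mid\field_{j-1}]$ and $\E[Z_j^2\mid\field_{j-1}]$; your observation that $\nabla M_j = X_j-\E[X_j\mid\field_{j-1}]$ with $X_j=2\sum_{i\in S}D_{j-1,i}$, so that $\E[(\nabla M_j)^2\mid\field_{j-1}]=\Var(X_j\mid\field_{j-1})$, is a cleaner way to organize the identical calculation.

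Where you genuinely improve on the paper is the passage from term-wise convergence to $V_n\convP 4m^3+4m^2$. The paper simply ``plugs in the asymptotic values of $\E[Z_{j-1}^2]$ and $\E[Z_{j-1}]$'' and asserts an $O_{L_1}(\log^2 n/n)$ remainder, without explaining how the random fluctuations of $Z_{j-1}$ around its mean are controlled in the Ces\`aro sum. Your $L_2$ argument --- computing $\E[V_n]$ via $\Var[M_n]=\Var[Z_n]$ and Corollary~\ref{Cor:VarZ}, and bounding $\Var[V_n]$ by $n^{-2}\bigl(\sum_j|\gamma_j|\sqrt{\Var Z_{j-1}}\bigr)^2=O(1/n)$ through Cauchy--Schwarz on the covariances --- fills exactly that gap and is more transparent. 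The alternative almost-sure route you sketch (promote $Z_n/n\to 5m^2+m$ to a.s.\ via $\sum_n\E[(\nabla M_n)^2]/n^2<\infty$, then apply ordinary Ces\`aro) is also valid and arguably even tidier.
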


\begin{proof}
	We rewrite $V_n$ as follows:
	\begin{align*}
	V_n &= \frac{1}{n} \sum_{j = 1}^{n} \E\left[\bigl(Z_j + \beta_j - (Z_{j - 1} + \beta_{j - 1})\bigr)^2 \, \Big| \, \field_{j - 1}\right]
	\\ &= \frac{1}{n} \sum_{j = 1}^{n} \Big( \E\left[(Z_j - Z_{j - 1})^2 \, \big| \, \field_{j - 1}\right] + 2 \, \E\bigl[(Z_j - Z_{j - 1})(\beta_j - \beta_{j - 1}) \, \big| \, \field_{j - 1}\bigr]
	\\ &\qquad{}+ \E\left[(\beta_j - \beta_{j - 1})^2 \, \big| \, \field_{j - 1}\right] \Big).
	\end{align*}
	We calculate the three expectations in the summand one after another. The first part is
	\begin{align*}
	\E\left[(Z_j - Z_{j - 1})^2 \, \big| \, \field_{j - 1}\right] &= \E\left[Z_j^2 \given \field_{j - 1}\right] + Z^2_{j - 1} - 2Z_{j - 1}\E[Z_j \given \field_{j - 1}] 	
	\\ &= Z_{j - 1}^2 + C_1(j, m, m_0)Z_{j - 1} + C_2(j, m, m_0) + Z_{j - 1}^2
	\\ &\qquad{}- 2Z_{j - 1} \bigl(Z_{j - 1} + C_3(j, m, m_0)\bigr),
	\end{align*}
	where $C_3(j, m, m_0) = m\bigl( (5m + 1)j + 2m_0^2 + (m - 1)m_0 - 2(5m + 1)\bigr)/(j + m_0 - 2)$. The second part is
	\begin{align*}
	2 \, \E\bigl[(Z_j - Z_{j - 1})(\beta_j - \beta_{j - 1}) \, \big| \, \field_{j - 1}\bigr] &= (\beta_j - \beta_{j - 1})\bigl(\E[Z_j \given \field_{j - 1}] - Z_{j - 1}\bigr)
	\\&= -2 \, C_3(j, m, m_0)^2.
	\end{align*}
	The third part is
	$$\E\left[(\beta_j - \beta_{j - 1})^2 \, \big| \, \field_{j - 1}\right] = C_3(j, m, m_0)^2.$$
	Plugging in the asymptotic values of $\E\left[Z^2_{j - 1}\right]$ and $\E[Z_{j - 1}]$, we get
	\begin{align*}
	V_n &= \frac{1}{n} \sum_{j = 1}^{n} \bigl((25m^4 + 14m^3 + 5m^2) - 2m^2(5m + 1)^2 + m^2(5m + 1)^2 
	\\&\qquad{}+ O\left(\log{j}/j\right)\bigr)
	\\ &= 4m^3 + 4m^2 + O_{L_1}\left(\log^2{n}/n\right)
	\\ &\convL 4m^3 + 4m^2,
 	\end{align*}
 	which is stronger than the in-probability convergence required for the conditional variance condition.
\end{proof}

\begin{theorem}
	As $n \to \infty$, we have
	$$\frac{Z_n - (5m^2 + m)n}{2m\sqrt{m + 1}\sqrt{n}} 
	\convD N(0, 1).$$
\end{theorem}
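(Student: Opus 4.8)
The plan is to apply a martingale central limit theorem to the martingale $\{M_n\}$ constructed in Lemma~\ref{Lem:martingale}, and then to transfer the resulting Gaussian law back to $Z_n$ via Slutsky's theorem. Concretely, I would invoke the MCLT from~\cite{Hall} in the form that requires a conditional Lindeberg condition together with a conditional variance condition, since its hypotheses are precisely the two statements already verified above: the martingale differences $\nabla M_j$ are square-integrable (indeed uniformly $O(\log n)$), the filtrations $\{\field_j\}$ are nested, the conditional Lindeberg quantity $U_n \convP 0$, and the conditional variance quantity $V_n \convP 4m^3 + 4m^2$. The conclusion is
$$\frac{1}{\sqrt{n}} \sum_{j = 1}^{n} \nabla M_j \convD N\!\left(0,\, 4m^3 + 4m^2\right).$$
Since $\sum_{j=1}^{n} \nabla M_j = M_n - M_1$ and $M_1$ is a fixed constant, dividing by $\sqrt{n}$ annihilates the boundary term and yields $M_n / \sqrt{n} \convD N(0,\, 4m^3 + 4m^2)$.

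Next I would reconcile $M_n$ with the centered Zagreb index. From Lemma~\ref{Lem:martingale},
$$M_n = Z_n - \frac{(5m^2 + m)n^2 + O(n \log n)}{n + m_0 - 1} = Z_n - (5m^2 + m)n + O(\log n),$$
because $(5m^2 + m)n^2/(n + m_0 - 1) = (5m^2 + m)n + O(1)$ and the $O(n\log n)$ in the numerator contributes only $O(\log n)$. Consequently,
$$\frac{Z_n - (5m^2 + m)n}{\sqrt{n}} = \frac{M_n}{\sqrt{n}} + O\!\left(\frac{\log n}{\sqrt{n}}\right),$$
and the error term tends to $0$, so Slutsky's theorem gives $\bigl(Z_n - (5m^2 + m)n\bigr)/\sqrt{n} \convD N(0,\, 4m^3 + 4m^2)$. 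Finally, observing that $4m^3 + 4m^2 = 4m^2(m + 1) = \bigl(2m\sqrt{m + 1}\bigr)^2$, I would divide through by $2m\sqrt{m + 1}$ to reach the stated standard normal limit.

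Once the three lemmas are in hand, the argument is essentially bookkeeping; the only points that need care are checking that the chosen MCLT in~\cite{Hall} applies verbatim to the triangular-array rescaling $\nabla M_j / \sqrt{n}$ (in particular that no extra nesting or maximal-jump hypothesis is needed beyond what the uniform $O(\log n)$ bound on $|\nabla M_j|$ already supplies), and that the centering term extracted from the rational expression in Lemma~\ref{Lem:martingale} is genuinely $(5m^2 + m)n$ up to an $o(\sqrt{n})$ error. I do not expect a real obstacle, since the $\log n$ remainder is comfortably $o(\sqrt{n})$.
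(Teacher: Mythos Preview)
Your proposal is correct and follows the same route as the paper: apply the MCLT from~\cite{Hall} to the martingale of Lemma~\ref{Lem:martingale} using the two verified conditions, then replace the centering term by $(5m^2+m)n$ since the discrepancy is $O(\log n)=o(\sqrt{n})$. The paper compresses the Slutsky step into the single asymptotic equivalence $\frac{Z_n+\beta_n-(Z_1+\beta_1)}{\sqrt n}\sim\frac{Z_n-(5m^2+m)n}{\sqrt n}$, but the content is identical to what you wrote.
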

\begin{proof}
	Upon the verifications of the two conditions in Lemmata~\ref{Lem：Lindeberg} and~\ref{Lem:variance}, we have
	$$\frac{Z_n + \beta_n - (Z_1 + \beta_1)}{\sqrt{n}} \sim \frac{Z_n - (5m^2 + m)n}{\sqrt{n}} \convD \mathcal{N}(0, 4m^3 + 4m^2),$$
	by the MCLT. This is equivalent to the stated result in the theorem.
\end{proof}

\section{Zagreb index of PORTs}
\label{Sec:PORTs}

In contrast to a RRT, a {\em plain-oriented recursive tree} (PORT) accounts for orders in the growth process. One simple way to interpret its evolution is that the probability that a node is chosen as a parent for a new child is proportional to its {\em degree} in the current tree. Mathematically, it is given by
$$\Prob(\indicator_v) = \frac{\degree(v)}{\sum_{u \in V} \degree(u)},$$
where $\indicator_v$ indicates the event that node $v$ is chosen as a parent for the newcomer, and $V$ is the set of all nodes in the current tree. Therefore, PORTs are a class of nonuniform trees. As its evolutionary process coincides with an attractive network characteristic---preferential attachment~\cite{Barabasi}, PORTs are of substantial interest in the community.

The Zagreb index of PORTs was investigated in a recent article~\cite{ZhangArXiv}, where the exact mean and variance were determined. The authors claimed that the Zagreb index of PORTs does not follow a Gaussian law as time goes to infinity by showing a numeric experiment. In this paper, we provide a more rigorous proof in support of that conjecture.

Let $T_n$ be a PORT at time $n$. As one node joins the tree at each step, there is a total of $n$ nodes in the $T_n$. We label these $n$ nodes with $\{1, 2, \ldots, n\}$ according to the time point of their appearance in the tree. Let $D_{n, j}$ be the degree of node $j$ at time $n$. The Zagreb index of $T_n$ is given by
$$Z_n = {\rm Zagreb}(T_n) = \sum_{j = 1}^{n} D^2_{n, j},$$
where $D_{n, j}$, again, is the degree of the node labeled with $j$ in $T_n$. 

\begin{prop}[\cite{ZhangArXiv}]
	For $n \ge 1$, we have
	\begin{align*}
	&\E[Z_n] = 2(n - 1) \bigl(\Psi(n) + \gamma \bigr)
	\\ &\E[Y_n] = \frac{32 \Gamma(n + 1/2)}{\sqrt{\pi}\Gamma(n - 1)} - 6(n - 1)\left(\Psi(n) + \gamma + \frac{8}{3}\right),
	\\ &\E \left[Z^2_n\right] = 4(n \log{n})^2 + 8 \gamma \left(n^2 \log{n}\right) + \left(16 + 4 \gamma^2 - \frac{2 \pi^2}{3}\right) n^2 + O\left(n^{3/2}\right),
	\end{align*}
	where $\Psi(\cdot)$ is the digamma function, $\gamma$ is the Euler's constant, and $Y_n := \sum_{j = 1}^{n} D^3_{n, j}$ is a topological index summing the cubic degrees of all the nodes.
\end{prop}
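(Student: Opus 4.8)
The identities are from~\cite{ZhangArXiv}; here is the route one takes to reprove them. For each of $Z_n$, the auxiliary sum $Y_n$ of cubed degrees, and $Z_n^2$, the plan is to set up a first-order linear recurrence in $n$ by conditioning on one growth step, and then solve it with an integrating factor. Write $\field_{n-1}$ for the history of $T_{n-1}$. As $T_{n-1}$ has $n-2$ edges, its degrees sum to $2(n-2)$, so the node added at time $n$ attaches to the node labelled $j$ with conditional probability $D_{n-1,j}/\bigl(2(n-2)\bigr)$ given $\field_{n-1}$; on that event $D_{n,j}=D_{n-1,j}+1$, $D_{n,n}=1$ and every other degree is unchanged, whence the almost-sure identities
\begin{align*}
Z_n &= Z_{n-1}+2D_{n-1,j}+2,\\
Y_n &= Y_{n-1}+3D_{n-1,j}^2+3D_{n-1,j}+2,\\
Z_n^2 &= Z_{n-1}^2+4Z_{n-1}D_{n-1,j}+4D_{n-1,j}^2+4Z_{n-1}+8D_{n-1,j}+4.
\end{align*}
Averaging over $j$ turns a weighted sum $\sum_i\tfrac{D_{n-1,i}}{2(n-2)}D_{n-1,i}^{\,r}$ into $\tfrac{1}{2(n-2)}\sum_iD_{n-1,i}^{\,r+1}$, where $\sum_iD_{n-1,i}=2(n-2)$, $\sum_iD_{n-1,i}^2=Z_{n-1}$, $\sum_iD_{n-1,i}^3=Y_{n-1}$. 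A further expectation, inserting the already-solved $\E[Z_{n-1}]$ in the last two lines, gives for $n\ge3$
\begin{align*}
\E[Z_n] &= \frac{n-1}{n-2}\,\E[Z_{n-1}]+2,\\
\E[Y_n] &= \frac{2n-1}{2(n-2)}\,\E[Y_{n-1}]+3H_{n-2}+2,\\
\E[Z_n^2] &= \frac{n}{n-2}\,\E[Z_{n-1}^2]+8(n-1)H_{n-2}+\frac{2\,\E[Y_{n-1}]}{n-2}+4,
\end{align*}
with $H_k=\sum_{i=1}^k i^{-1}$ and base cases $\E[Z_2]=\E[Y_2]=2$, $\E[Z_2^2]=4$.

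For $\E[Z_n]$ the integrating factor is $n-1$: setting $\E[Z_n]=(n-1)b_n$ gives $b_n=b_{n-1}+2/(n-1)$, so $b_n=2H_{n-1}$ and $\E[Z_n]=2(n-1)H_{n-1}=2(n-1)(\Psi(n)+\gamma)$ via $\Psi(n)=H_{n-1}-\gamma$. For $\E[Y_n]$ I would look for a particular solution of the form $(n-1)(\alpha H_{n-1}+\beta)$; substituting and matching first the $H$-terms and then the constants forces $\alpha=-6$, $\beta=-16$, so the particular solution is $-6(n-1)\bigl(\Psi(n)+\gamma+\tfrac83\bigr)$. The homogeneous solution is $\prod_{k=3}^n\tfrac{2k-1}{2(k-2)}$, which the Legendre duplication formula $\Gamma(2z)=\tfrac{2^{2z-1}}{\sqrt{\pi}}\Gamma(z)\Gamma(z+\tfrac12)$ collapses to a constant multiple of $\Gamma(n+\tfrac12)/\Gamma(n-1)$; fixing that constant from $\E[Y_2]=2$ (using $\Gamma(\tfrac52)=\tfrac34\sqrt{\pi}$) produces precisely the coefficient $32/\sqrt{\pi}$, i.e.\ the stated $\E[Y_n]$.

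For $\E[Z_n^2]$ the integrating factor is $\prod_{k=3}^n\tfrac{k}{k-2}=\tfrac12n(n-1)$, so with $\E[Z_n^2]=\tfrac12n(n-1)q_n$ the recurrence becomes a telescoping sum $q_n=q_2+\sum_{k=3}^n\tfrac{2}{k(k-1)}\bigl(8(k-1)H_{k-2}+\tfrac{2\E[Y_{k-1}]}{k-2}+4\bigr)$, whose dominant summand is $16H_{k-2}/k$. Inserting the closed forms for $\E[Z_{k-1}]$ and $\E[Y_{k-1}]$ and using the standard facts $\sum_{k\le m}H_k/k=\tfrac12(H_m^2+H_m^{(2)})$, $H_m=\log m+\gamma+O(1/m)$, and $H_m^{(2)}=\sum_{i\le m}i^{-2}\to\zeta(2)=\pi^2/6$, one evaluates $q_n=8\log^2n+16\gamma\log n+8\gamma^2+K+O(n^{-1/2})$, where $K$ collects the convergent harmonic sums and the $O(n^{-1/2})$ remainder absorbs the $\Gamma(n+\tfrac12)/\Gamma(n-1)\sim n^{3/2}$ input coming through $\E[Y_{k-1}]$; multiplying by $\tfrac12n(n-1)$ and simplifying $K$ yields $4(n\log n)^2+8\gamma n^2\log n+\bigl(16+4\gamma^2-\tfrac{2\pi^2}{3}\bigr)n^2+O(n^{3/2})$.

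The first paragraph is routine, and the substance is in the last two. The two delicate points are: for $\E[Y_n]$, realising that the homogeneous mode is the non-polynomial factor $\Gamma(n+\tfrac12)/\Gamma(n-1)$ and using the duplication formula to rewrite the product so the answer closes through $\Psi$; and for $\E[Z_n^2]$, carrying the harmonic-sum expansions far enough that $\zeta(2)=\pi^2/6$ surfaces from the $\sum H_k/k$-type sums and the $n^2$-coefficient is pinned down as $16+4\gamma^2-\tfrac{2\pi^2}{3}$ rather than merely its leading $n^2\log^2n$ order, the half-integer exponent in the $O(n^{3/2})$ error being the visible trace of the $\Gamma(n+\tfrac12)$ term entering $\E[Z_n^2]$ via $\E[Y_{n-1}]$.
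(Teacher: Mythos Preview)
The paper does not actually prove this proposition: it is quoted verbatim from~\cite{ZhangArXiv} and stated without argument, so there is no ``paper's own proof'' to compare against. Your derivation is correct and, moreover, is exactly the methodology the paper itself uses for the surrounding results (Lemmas~\ref{Lem:X} and~\ref{Lem:ZY}, and the third-moment proposition): condition on $\field_{n-1}$ and the attachment event, use that the selection probability is $D_{n-1,j}/\bigl(2(n-2)\bigr)$ so that weighted degree sums promote to the next power-sum, take a further expectation, and solve the resulting first-order linear recurrence. Your identification of the homogeneous mode $\Gamma(n+\tfrac12)/\Gamma(n-1)$ in the $Y_n$ recurrence and the matching of the initial condition to produce the $32/\sqrt{\pi}$ constant are both right, as is the integrating-factor reduction of the $\E[Z_n^2]$ recurrence and the appearance of $\zeta(2)=\pi^2/6$ through $\sum_k H_k/k$; the $O(n^{3/2})$ remainder indeed traces back to the $\Gamma(n+\tfrac12)/\Gamma(n-1)$ term in $\E[Y_{n-1}]$. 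In short, you have supplied the proof the paper omits, by the same route the paper applies to its neighbouring lemmas.
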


Based off the simulation result in~\cite{ZhangArXiv}, we suspect that the asymptotic distribution of $Z_n$ is skewed to the right, violating the property of symmetry of normal. Therefore, it suffices to show that the skewness of $Z_n$ is not zero; in fact, it is always negative.

In probability theory, the {\em skewness} of a random variable is defined as its standardized third central moment, i.e,
$${\mathcal{S}}(Z_n) = \E \left[\left(\frac{Z_n - \mu_{Z_n}}{\sigma_{Z_n}}\right)^3\right] = \frac{\E\left[Z_n^3\right] - 3 \E[Z_n] \E\left[Z_n^2\right] + 2 \bigl(\E[Z_n]\bigr)^3}{\bigl(\Var[Z_n]\bigr)^{3/2}},$$
in which most of the elements have already been determined, except for the third moment of $Z_n$. We resort to a recurrence method to calculate $\E\left[Z_n^3\right]$ exactly. To construct a recurrence for $\E\left[Z_n^3\right]$, we need the results presented in the next two lemmata. The first lemma is based on a new topological index, $X_n$, the sum of the degrees to the fourth power of all the nodes in $T_n$, i.e., $X_n := \sum_{j = 1}^{n} D^4_{n, j}$. 

\begin{lemma}
	\label{Lem:X}
	For $n \ge 2$, we have
	$$\E[X_n] = 36(n - 1)\left(n +\frac{7 \Psi(n)}{18} + \frac{7 \gamma}{18} + \frac{5}{3}\right) - \frac{192 \Gamma(n +1/2)}{\sqrt{\pi} \Gamma(n - 1)}.$$
	As $n \to \infty$, we have
	$$\frac{X_n}{n^2} \convL 36.$$
	The convergence takes place in probability as well.
\end{lemma}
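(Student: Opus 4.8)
The plan is to run the same one-step recurrence analysis that was used for $U_n$. When the newcomer $n$ attaches, inside the PORT $T_{n-1}$, to a parent $v$ of degree $d := D_{n-1,v}$, the index $X$ changes by $(d+1)^4-d^4+1 = 4d^3+6d^2+4d+2$, the trailing $+1$ being the fourth power of the degree of the new leaf. Since in $T_{n-1}$ node $v$ is picked with probability $\degree(v)/\sum_u \degree(u) = D_{n-1,v}/(2(n-2))$, averaging $4d^3+6d^2+4d+2$ against this law turns $\sum_j D_{n-1,j}D_{n-1,j}^{\,k}$ into $X_{n-1}$, $Y_{n-1}$, $Z_{n-1}$ for $k=3,2,1$, and into $2(n-2)$ for $k=0$. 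This gives the conditional identity
\begin{equation*}
\E[X_n \given \field_{n-1}] = \left(1+\frac{2}{n-2}\right)X_{n-1} + \frac{3Y_{n-1}+2Z_{n-1}}{n-2} + 2, \qquad n \ge 3,
\end{equation*}
with deterministic base cases $X_1 = 0$, $X_2 = 2$, $X_3 = 18$ (randomness enters only at $n = 4$).

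Taking a further expectation and substituting the exact $\E[Y_{n-1}]$ and $\E[Z_{n-1}]$ from the preceding proposition (and $(n-2)\Gamma(n-2)=\Gamma(n-1)$) collapses the right-hand side into a first-order linear recurrence
\begin{equation*}
\E[X_n] = \frac{n}{n-2}\,\E[X_{n-1}] + g(n), \qquad g(n) = \frac{96\,\Gamma(n-1/2)}{\sqrt{\pi}\,\Gamma(n-1)} - 14\bigl(\Psi(n-1)+\gamma\bigr) - 46 .
\end{equation*}
I would solve this with the summation factor $s_n = \prod_{k=3}^{n}\frac{k-2}{k} = \frac{2}{n(n-1)}$, chosen so that $s_n\E[X_n]-s_{n-1}\E[X_{n-1}] = s_n g(n)$ telescopes, yielding
\begin{equation*}
\E[X_n] = \frac{n(n-1)}{2}\left(2 + \sum_{k=3}^{n}\frac{2\,g(k)}{k(k-1)}\right).
\end{equation*}
The sum splits into three pieces: $\sum 1/(k(k-1))$ telescopes; $\sum \bigl(\Psi(k-1)+\gamma\bigr)/(k(k-1)) = \sum H_{k-2}/(k(k-1))$ is evaluated by Abel summation against that telescoping kernel; and $\sum \Gamma(k-1/2)/(k(k-1)\Gamma(k-1))$ is Gosper-summable, with antidifference a rational multiple of $\Gamma(k-1/2)/\Gamma(k)$. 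Assembling the three closed forms, fixing the free constant by $\E[X_2]=2$, and simplifying reproduces the stated formula; notably the partial-sum remainders of the last two pieces are exactly what produce the $\Psi(n)$ (i.e.\ the $n\log n$) term and the $\Gamma(n+1/2)/\Gamma(n-1)$ (i.e.\ the $n^{3/2}$) term in the answer.

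For the limit law I would argue as in Proposition~\ref{Prop:EZU}: from the closed form, $\Gamma(n+1/2)/\Gamma(n-1)=n^{3/2}+O(n^{1/2})$ gives $\E[X_n]=36n^2+O(n^{3/2})$, hence $\E[X_n]/n^2\to 36$; then
\begin{equation*}
\E\bigl[\,|X_n-36n^2|\,\bigr] \le \E\bigl[\,|X_n-\E[X_n]|\,\bigr] + \bigl|\E[X_n]-36n^2\bigr| \le \sqrt{\Var[X_n]} + O(n^{3/2}),
\end{equation*}
so dividing by $n^2$ delivers $X_n/n^2 \convL 36$ (and $\convP$) once a bound $\Var[X_n]=o(n^4)$ is available; that bound comes from the same device used for $Z_n$ elsewhere in this work, namely squaring the one-step identity for $X_n$, taking the two nested expectations, inserting the already-known asymptotics of $\E[X_n]$, $\E[Y_n]$, $\E[Z_n]$ together with crude bounds on $\E[\sum_j D_{n,j}^{6}]$ and $\E[\sum_j D_{n,j}^{8}]$ for the higher-moment terms that surface, and solving the resulting recurrence for $\E[X_n^2]$. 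I expect the main obstacle to be purely bookkeeping: carrying the Gosper antidifference of the $\Gamma$-ratio sum through the telescoping, and retaining enough subleading terms of $\E[Y_{k-1}]$ and $\E[Z_{k-1}]$ so that the digamma coefficients and the $\Gamma(n+1/2)/\Gamma(n-1)$ coefficient land on precisely the values in the statement rather than merely up to $O(n)$.
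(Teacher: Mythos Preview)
Your proposal is correct and follows essentially the same route as the paper: the one-step increment $(d+1)^4-d^4+1$, the conditional identity $\E[X_n\mid\field_{n-1}]=\frac{n}{n-2}X_{n-1}+\frac{3Y_{n-1}+2Z_{n-1}}{n-2}+2$, and the substitution of the known $\E[Y_{n-1}]$, $\E[Z_{n-1}]$ all match the paper exactly (your $g(n)$ is correct). Your summation-factor solution with the explicit Gosper/Abel evaluations is considerably more detailed than the paper, which simply writes ``solving the recurrence with initial condition $\E[X_2]=2$'' and states the answer; for the limit the paper does \emph{not} go through a variance bound but just reads off $\E[X_n]=36n^2+O(n^{3/2})$ via Stirling and asserts the $L_1$ (hence in-probability) convergence directly, so your variance-based route is more work than the paper actually commits to.
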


\begin{proof}
	Upon time $n$ (node $n$ is not yet inserted), there is a total of $(n - 1)$ nodes in the current tree. In addition, the total of node degrees is $2(n - 2)$. By the definition of $X_n$, we have the following almost-sure relations from $X_{n - 1}$ (right before the insertion of node $n$) to $X_n$ (right after the insertion of node $n$), conditional on $\field_{n - 1}$ and $\indicator_{n, j}$, the event indicating that node $j$ is chosen as a parent for node $n$:
	\begin{align}
	X_n &= X_{n - 1} + (D_{n - 1, j} + 1)^4 - D^4_{n - 1, j} + 1 \nonumber
	\\ &= X_{n - 1} + 4D^3_{n - 1, j} + 6D^2_{n - 1, j} +4D_{n - 1, j} + 2.	\label{Eq:recX}
	\end{align}
	We average Equation~(\ref{Eq:recX}) out over $j$ to get
	\begin{align*}
	\E[X_n \given \field_{n - 1}] &= X_{n - 1} + 4\sum_{j = 1}^{n - 1}\frac{D^4_{n - 1}}{2(n - 2)} + 6\sum_{j = 1}^{n - 1}\frac{D^3_{n - 1}}{2(n - 2)} 
	\\ &\qquad{}+ 4\sum_{j = 1}^{n - 1}\frac{D^2_{n - 1}}{2(n - 2)} + 2
	\\ &= \frac{n}{n - 2} X_{n - 1} + \frac{3}{n - 2} Y_{n - 1} + \frac{2}{n - 2} Z_n + 2.
	\end{align*}
	Taking another expectation and plugging in the results of $\E[Y_n]$ and $\E[Z_n]$, we obtain a recurrence for $\E[X_n]$. Solving the recurrence with initial condition $\E[X_2] = X_2 = 2$, we obtain the result stated in the lemma.
	
	As $n \to \infty$, the digamma function $\Psi(n)$ in the first term is of order $\log{n}$. Meanwhile, the second fraction is of order $n^{3/2}$ according to the {\em Stirling's approximation}. Thus, we have $X_n/n^2$ converges to $36$ in $L_1$-space, which also suggests a weak law for $X_n$.
\end{proof}

In the second lemma, we derive the mixed moment of $Z_n$ and $Y_n$; namely, $\E[Z_n Y_n]$. Apparently, the variables $Z_n$ and $Y_n$ are not independent. Our strategy is to establish a recurrence on the expectation of the product of $Z_nY_n$.

\begin{lemma}
	\label{Lem:ZY}
	For $n \ge 2$, we have
	$$\E[Z_n Y_n] = \frac{64 \bigl(\log{n} + O(1)\bigr)\Gamma(n + 3/2) + 32/15}{\sqrt{\pi} \Gamma(n - 1)}.$$
	As $n \to \infty$, we have
	$$\frac{Z_n Y_n}{n^{5/2}\log{n}} \convL \frac{64}{\sqrt{\pi}}.$$
\end{lemma}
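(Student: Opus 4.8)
\emph{Proof proposal.} The plan is to reuse the recurrence machinery of Lemma~\ref{Lem:X}, now applied to the product $Z_nY_n$. First I would record the two almost-sure transitions caused by attaching node $n$ to node $j$: since the degree of $j$ rises by one and a new degree-one node appears,
\[
Z_n = Z_{n-1} + 2D_{n-1,j} + 2, \qquad Y_n = Y_{n-1} + 3D_{n-1,j}^2 + 3D_{n-1,j} + 2,
\]
conditionally on $\field_{n-1}$ and $\indicator_{n,j}$. Multiplying these identities expresses $Z_nY_n$ as $Z_{n-1}Y_{n-1}$ plus an explicit polynomial in $D_{n-1,j}$, $Z_{n-1}$ and $Y_{n-1}$.

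Next I would average over $j$ using $\Prob(\indicator_{n,j}\given\field_{n-1}) = D_{n-1,j}/(2(n-2))$ together with $\sum_j D_{n-1,j}=2(n-2)$, $\sum_j D_{n-1,j}^2 = Z_{n-1}$, $\sum_j D_{n-1,j}^3 = Y_{n-1}$ and $\sum_j D_{n-1,j}^4 = X_{n-1}$. Every term then collapses, and a short computation gives
\begin{align*}
\E[Z_nY_n\given\field_{n-1}] &= \frac{2n+1}{2(n-2)}\,Z_{n-1}Y_{n-1} + \frac{3}{2(n-2)}\,Z_{n-1}^2 + \frac{3}{n-2}\,X_{n-1}
\\ &\qquad{}+ \Bigl(2+\tfrac{6}{n-2}\Bigr)Y_{n-1} + \Bigl(2+\tfrac{5}{n-2}\Bigr)Z_{n-1} + 4.
\end{align*}
Taking a second expectation and substituting the known asymptotics of $\E[Z_{n-1}]$, $\E[Y_{n-1}]$, $\E[Z_{n-1}^2]$ and $\E[X_{n-1}]$ (the last from Lemma~\ref{Lem:X}) produces a first-order linear recurrence $\E[Z_nY_n] = \frac{2n+1}{2(n-2)}\,\E[Z_{n-1}Y_{n-1}] + g(n)$ with $g$ explicit. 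Its integrating factor telescopes, $\prod_{k}\frac{2k+1}{2(k-2)} = \Gamma(n+3/2)/(\Gamma(7/2)\Gamma(n-1))$, which is of order $n^{5/2}$; solving with $\E[Z_2Y_2]=4$ (equivalently $\E[Z_3Y_3]=60$) gives the closed form, the subleading constant $32/15$ being the image of the initial value under the integrating factor.

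The one genuinely delicate point — and the step I expect to be the main obstacle — is identifying which inhomogeneous term is responsible for the factor $\log n$ in the leading order. Against the $\Theta(k^{5/2})$ integrating factor, the summands coming from $\E[Z_{k-1}^2]=\Theta(k^2\log^2 k)$ and $\E[X_{k-1}]=\Theta(k^2)$ have sizes $\Theta(k^{-1/2}\log^2 k)$ and $\Theta(k^{-1/2})$, and hence contribute only $O(\sqrt n\,\log^2 n)$ to the particular solution; it is instead the innocuous-looking term $2\,\E[Y_{k-1}]\sim (64/\sqrt\pi)k^{3/2}$ that, after dividing by the integrating factor, yields a harmonic sum $\sum_{k\le n}1/k\sim\log n$ and therefore the $64\log n\,\Gamma(n+3/2)/(\sqrt\pi\,\Gamma(n-1))$ in the statement. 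Carrying this out correctly requires keeping enough lower-order terms of $\E[Z_n^2]$ and $\E[X_n]$ to certify that nothing beats $k^{3/2}$ in $g$, and then checking that every remaining contribution is $O(1)$ relative to $\log n$ after division by $\Gamma(n+3/2)/(\sqrt\pi\,\Gamma(n-1))$ — routine but error-prone bookkeeping rather than a conceptual difficulty.

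Finally, for the weak law I would note that $\E[Z_nY_n]/(n^{5/2}\log n)\to 64/\sqrt\pi$, that $Z_n/(n\log n)\to 2$ in $L_2$ (since $\Var[Z_n]=O(n^2)$ by the Proposition) and, with an analogous second-moment bound, that $Y_n/n^{3/2}$ concentrates; the stated $L_1$ convergence $Z_nY_n/(n^{5/2}\log n)\convL 64/\sqrt\pi$ then follows exactly as in the proof of Lemma~\ref{Lem:X}.
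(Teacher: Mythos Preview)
Your approach is essentially the paper's: the same almost-sure recursions for $Z_n$ and $Y_n$, the same product expansion, the same conditional-expectation identity (your formula matches the paper's after writing $2+\tfrac{6}{n-2}=\tfrac{2(n+1)}{n-2}$ and $2+\tfrac{5}{n-2}=\tfrac{2n+1}{n-2}$), and the same first-order linear recurrence solved from $\E[Z_2Y_2]=4$. Your discussion of the $L_1$ limit is in fact more careful than the paper's, which simply appeals to Stirling.

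There is one genuine slip in your ``delicate point'' paragraph. When you size the contributions of $\E[Z_{k-1}^2]$ and $\E[X_{k-1}]$ against the integrating factor $P_k\sim k^{5/2}$, you drop the coefficients $\tfrac{3}{2(k-2)}$ and $\tfrac{3}{k-2}$ that sit in front of them in $g(k)$. The correct sizes of these summands in $\sum_k g(k)/P_k$ are $\Theta(k^{-3/2}\log^2 k)$ and $\Theta(k^{-3/2})$, not $\Theta(k^{-1/2}\log^2 k)$ and $\Theta(k^{-1/2})$. This matters: with the sizes you wrote, the partial sum would be $\Theta(\sqrt{n}\,\log^2 n)$, and after multiplying back by $P_n\sim n^{5/2}$ you would obtain a term of order $n^3\log^2 n$, which \emph{dominates} the claimed $n^{5/2}\log n$ leading term and would invalidate your conclusion. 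With the correct $k^{-3/2}$ decay these sums are $O(1)$, and then your identification of the $\log n$ as coming from $2\,\E[Y_{k-1}]\sim(64/\sqrt\pi)\,k^{3/2}$ (whose coefficient $\tfrac{2(k+1)}{k-2}\sim 2$ carries no extra $1/k$) is exactly right.
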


\begin{proof}
	By the definition of $Z_n$ ($Y_n$), we have the following almost-sure relations from $Z_{n - 1}$ ($Y_{n - 1}$) to $Z_n$ ($Y_n$), conditional on $\field_{n - 1}$ and $\indicator_{n, j}$:
	\begin{align}
	\label{Eq:recZ}
	Z_n &= Z_{n - 1} + (D_{n - 1, j} + 1)^2 - D^2_{n - 1, j} + 1 = Z_{n - 1} + 2 D_{n - 1, j} + 2.
	\\ Y_n &= Y_{n - 1} + (D_{n - 1, j} + 1)^3 - D^3_{n - 1, j} + 1 \nonumber 
	\\ \label{Eq:recY} &= Y_{n - 1} + 3 D^2_{n - 1, j} + 3 D_{n - 1, j} + 2.
	\end{align}
	Taking the product of Equations~(\ref{Eq:recZ}) and~(\ref{Eq:recY}), we get
	\begin{align*}
	Z_n Y_n &= Z_{n - 1}Y_{n - 1} + 3Z_{n - 1}D^2_{n - 1} + 3Z_{n - 1}D_{n - 1, j} + 2Z_{n - 1}
	\\ &\qquad{}+ 2Y_{n - 1}D_{n - 1, j} + 6D^3_{n - 1, j} +6D^2_{n - 1, j} + 4D_{n - 1, j}
	\\ &\qquad{}+ 2Y_{n - 1} +6D^2_{n - 1, j} + 6D_{n - 1, j} + 4.
	\end{align*}
	Averaging it out over $j$, we then have
	\begin{align*}
		\E[Z_n Y_n \given \field_{n - 1}] &= Z_{n - 1}Y_{n - 1} + 3Z_{n - 1}\sum_{j = 1}^{n - 1}\frac{D^3_{n - 1}}{2(n - 2)} + 3Z_{n - 1}\sum_{j = 1}^{n - 1}\frac{D^2_{n - 1}}{2(n - 2)} 
		\\ &\qquad{}+ 2Z_{n - 1} + 2Y_{n - 1}\sum_{j = 1}^{n - 1}\frac{D^2_{n - 1}}{2(n - 2)} + 6\sum_{j = 1}^{n - 1}\frac{D^4_{n - 1}}{2(n - 2)}
		\\ &\qquad{} + 6\sum_{j = 1}^{n - 1}\frac{D^3_{n - 1}}{2(n - 2)} + 4\sum_{j = 1}^{n - 1}\frac{D^2_{n - 1}}{2(n - 2)} + 2Y_{n - 1} 
		\\ &\qquad{}+6\sum_{j = 1}^{n - 1}\frac{D^3_{n - 1}}{2(n - 2)}  + 6\sum_{j = 1}^{n - 1}\frac{D^2_{n - 1}}{2(n - 2)} + 4
		\\ &= \frac{2n + 1}{2(n - 2)} Z_{n - 1}Y_{n - 1} + \frac{3}{2(n - 2)} Z^2_{n - 1} + \frac{3}{n - 2} X_{n - 1} 
		\\ &\qquad{}+ \frac{2(n + 1)}{n - 2} Y_{n - 1} + \frac{2n + 1}{n - 2} Z_{n - 1} + 4.
	\end{align*}
	The recurrence for $\E[Z_nY_n]$ is then obtained by taking another expectation with respect to $\field_{n - 1}$ and plugging in the results of $\E\left[Z^2_{n - 1}\right]$, $\E[X_{n - 1}]$, $\E[Y_{n - 1}]$, and $\E[Z_{n - 1}]$. Solving the recurrence with initial condition $\E[Z_2Y_2] = Z_2Y_2 = 2 \times 2 = 4$, we obtain the solution of $\E[Z_n Y_n]$. The $L_1$ convergence follows by applying the Stirling's approximation to $\E[Z_n Y_n]$.
\end{proof}

Note that the exact solution for $\E[Z_n Y_n]$ is derived. However, it can not be written in a closed form. Instead, it is the sum of four fraction terms involving gamma functions, digamma functions, and first order polygamma functions. We thus only present several leading terms of the solution in Lemma~\ref{Lem:ZY} for better readability. The complete solution is available upon request.

We are now ready to derive the third moment of $Z_n$. We use the results from Lemmata~\ref{Lem:X} and~\ref{Lem:ZY} as well as those from~\cite{ZhangArXiv}.

\begin{prop}
	For $n \ge 2$, we have
	$$\E\left[Z^3_n\right] = (n + 1)n(n - 1)\left(8 \log^3{n} + 24 \log^2{n} + O(\log{n})\right).$$
	As $n \to \infty$, we have
	$$\frac{Z^3_n}{\left(n \log{n}\right)^3} \convL 8.$$
\end{prop}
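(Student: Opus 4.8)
The plan is to run the recurrence machinery of Lemmata~\ref{Lem:X} and~\ref{Lem:ZY} one power higher. Fix $n \ge 3$ and condition on $\field_{n-1}$ together with the event $\indicator_{n,j}$ that node $j$ is the parent of node $n$. Cubing the almost-sure relation in Equation~(\ref{Eq:recZ}) gives
\begin{align*}
Z_n^3 &= \bigl(Z_{n-1}+2D_{n-1,j}+2\bigr)^3 = Z_{n-1}^3 + 6Z_{n-1}^2 D_{n-1,j} + 6Z_{n-1}^2 + 12Z_{n-1}D_{n-1,j}^2 \\
&\quad{}+ 24Z_{n-1}D_{n-1,j} + 12Z_{n-1} + 8D_{n-1,j}^3 + 24D_{n-1,j}^2 + 24D_{n-1,j} + 8 .
\end{align*}
Averaging over $j$ with the preferential-attachment weights $\Prob(\indicator_{n,j}\given\field_{n-1}) = D_{n-1,j}/\bigl(2(n-2)\bigr)$ replaces $\sum_j D_{n-1,j}^{\ell}\,\Prob(\indicator_{n,j}\given\field_{n-1})$ by $\frac{1}{2(n-2)}\sum_j D_{n-1,j}^{\ell+1}$, so the powers $D_{n-1,j},D_{n-1,j}^2,D_{n-1,j}^3$ become $Z_{n-1},Y_{n-1},X_{n-1}$ (each over $2(n-2)$), and the mixed monomials $Z_{n-1}^2 D_{n-1,j},Z_{n-1}D_{n-1,j}^2,Z_{n-1}D_{n-1,j}$ become $Z_{n-1}^3,Z_{n-1}Y_{n-1},Z_{n-1}^2$ (each again over $2(n-2)$). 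This yields
\begin{align*}
\E\left[Z_n^3\given\field_{n-1}\right] &= \frac{n+1}{n-2}\,Z_{n-1}^3 + \frac{6n}{n-2}\,Z_{n-1}^2 + \frac{6}{n-2}\,Z_{n-1}Y_{n-1} + \frac{12(n-1)}{n-2}\,Z_{n-1} \\
&\quad{}+ \frac{4}{n-2}\,X_{n-1} + \frac{12}{n-2}\,Y_{n-1} + 8 ,
\end{align*}
which is exactly why $X_n$ (Lemma~\ref{Lem:X}) and $Z_nY_n$ (Lemma~\ref{Lem:ZY}) had to be controlled first: cubing produces $\E[D_{n-1,j}^3\given\field_{n-1}]$ and $\E[Z_{n-1}D_{n-1,j}^2\given\field_{n-1}]$.

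Next I would take a further expectation and substitute the closed forms of $\E[Z_{n-1}^2]$, $\E[Z_{n-1}Y_{n-1}]$, $\E[X_{n-1}]$, $\E[Y_{n-1}]$ and $\E[Z_{n-1}]$ (the moments of~\cite{ZhangArXiv} quoted above, together with Lemmata~\ref{Lem:X} and~\ref{Lem:ZY}), turning the above into a first-order linear recurrence
$$\E\left[Z_n^3\right] = \frac{n+1}{n-2}\,\E\left[Z_{n-1}^3\right] + g(n),$$
where $g(n)$ is an explicit (if unwieldy) combination of gamma, digamma and first-order polygamma functions. I would then solve it with the summation factor $P_n := \prod_{k=3}^{n}\frac{k+1}{k-2} = \frac{(n+1)n(n-1)}{6}$; with the initial value $\E\left[Z_2^3\right] = Z_2^3 = 8$ this gives
$$\E\left[Z_n^3\right] = \frac{(n+1)n(n-1)}{6}\left(8 + \sum_{k=3}^{n}\frac{6\,g(k)}{(k+1)k(k-1)}\right),$$
so the cubic prefactor $(n+1)n(n-1)$ in the statement is nothing but the homogeneous solution, and the bracket is what must be expanded.

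It then remains to extract the asymptotics of $\sum_{k}g(k)/\bigl((k+1)k(k-1)\bigr)$. The dominant contribution to $g(n)$ is $\frac{6n}{n-2}\E[Z_{n-1}^2]\sim 24\,n^2\log^2 n$; dividing by the cubic and summing (by Euler--Maclaurin, i.e.\ comparing with $\int\log^2 x/x\,dx = \tfrac13\log^3 x$) yields the leading $8\log^3{n}$. The $\log^2{n}$ coefficient is obtained by carrying the same expansion one order further: the relevant input is the $O(n^2\log n)$ and $O(n^2)$ part of $\E[Z_{n-1}^2]$, whereas $\E[Z_{n-1}Y_{n-1}]$, $\E[X_{n-1}]$ and $\E[Y_{n-1}]$ enter only at orders $n^{3/2}\log n$, $n$ and $\sqrt n$ after the division and hence contribute at most $O(\log n)$ after summation; one then uses $\int\log x/x\,dx = \tfrac12\log^2 x$ to read off the $\log^2{n}$ term, every remaining gamma/digamma/Stirling remainder being $O(\log n)$. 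Finally, the weak law $Z_n^3/(n\log n)^3\convL 8$ follows from this expansion exactly as in Lemmata~\ref{Lem:X} and~\ref{Lem:ZY}; alternatively one may note $Z_n/(n\log n)\convP 2$ from the mean and variance of $Z_n$, cube, and invoke Scheff\'e's lemma together with $\E[Z_n^3/(n\log n)^3]\to 8$.

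The obstacle here is not conceptual but clerical: $g(n)$ is assembled from three of the most cumbersome expressions in the paper, each carrying genuine logarithmic sub-leading terms, and one must track precisely which of their contributions survive at orders $n^2\log^2 n$, $n^2\log n$ and $n^2$ after being divided by $(k+1)k(k-1)$ and summed. The $\log^2{n}$ coefficient is the delicate one, being governed by the sub-leading rather than the leading behavior of $\E[Z_{n-1}^2]$ and by the discrete sums $\sum_k\log^2 k/k$ and $\sum_k\log k/k$; the safe route is to keep the exact gamma-function forms from Lemmata~\ref{Lem:X} and~\ref{Lem:ZY} and from the quoted moments throughout the recurrence and pass to asymptotics only at the very end, as was done for $\E[X_n]$ and $\E[Z_nY_n]$.
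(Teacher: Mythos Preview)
Your proposal is correct and follows exactly the paper's approach: cube the almost-sure relation~(\ref{Eq:recZ}), average with the preferential weights to obtain the conditional recurrence, take expectations and feed in the lower-order moments from Lemmata~\ref{Lem:X}--\ref{Lem:ZY} and~\cite{ZhangArXiv}, then solve with initial value $Z_2^3=8$. Your conditional formula matches the paper's (indeed your $X_{n-1}$ is the correct term; the paper's displayed $X_n$ is a typographical slip), and you supply more detail than the paper does on the summation-factor solution and on isolating the $8\log^3 n$ and $24\log^2 n$ coefficients.
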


\begin{proof}
	Recall the almost-sure relation for $Z_n$. Raising Equation~(\ref{Eq:recZ}) to the third power on both sides, we have
	\begin{align*}
		Z_n^3 &= Z^3_{n - 1} + 8D^3_{n - 1, j} + 8 + 6Z^2_{n - 1}D_{n - 1, j} + 6Z^2_{n - 1} + 12D^2_{n - 1, j}Z_{n - 1} 
		\\ &\qquad{}+ 24D^2_{n - 1, j} + 12Z_{n - 1} + 24D_{n - 1} + 24 Z_{n - 1}D_{n - 1, j}.
	\end{align*}
	Average it out over $j$ to get
	\begin{align*}
	\E\left[Z_n^3 \given \field_{n - 1}\right] &= Z^3_{n - 1} + 8 \sum_{j = 1}^{n - 1}\frac{D^4_{n - 1}}{2(n - 2)} + 8 + 6Z^2_{n - 1}\sum_{j = 1}^{n - 1}\frac{D^2_{n - 1}}{2(n - 2)} + 6Z^2_{n - 1} 
	\\ &\qquad{}+ 12Z_{n - 1}\sum_{j = 1}^{n - 1}\frac{D^3_{n - 1}}{2(n - 2)} + 24\sum_{j = 1}^{n - 1}\frac{D^3_{n - 1}}{2(n - 2)} + 12Z_{n - 1}  
	\\ &\qquad{}+ 24\sum_{j = 1}^{n - 1}\frac{D^2_{n - 1}}{2(n - 2)} + 24Z_{n - 1}\sum_{j = 1}^{n - 1}\frac{D^2_{n - 1}}{2(n - 2)}
	&
	\\ &= \frac{n + 1}{n - 2} Z^3_{n - 1} + \frac{6n}{n - 2} Z^2_{n - 1} +  \frac{6}{n - 2}Z_{n - 1}Y_{n - 1} 
	\\ &\qquad{}+ \frac{12(n - 1)}{n - 2} Z_{n - 1} + \frac{4}{n - 2} X_n + \frac{12}{n - 2}Y_{n - 1} + 8.
	\end{align*}
	
Taking expectation on both sides, and plugging in all the results of lower moments, we obtain a recurrence on the third moment of $Z_n$. We solve the recurrence with the initial condition $\E\left[Z^3_2\right] = Z^3_2 = 8$, to get the stated result in the proposition, and the $L_1$ convergence of $Z_n^3$ after it is properly scaled immediately.
\end{proof}

Similar to $\E[Z_n Y_n]$, we get the exact solution for $\E\left[Z^3_n\right]$. However, the solution is even more complicated than that for $\E[Z_n Y_n]$, involving digamma functions, {\em Meijer G functions} and nested sums which can not be simplified to closed forms. However, the leading terms that we have developed are sufficient to characterize the asymptotic behavior of the skewness of $Z_n$.

\begin{theorem}
	\label{Thm:nonnormal}
	As $n \to \infty$, the distribution of $Z_n$ is skewed to the right. Hence, it is not normal.
\end{theorem}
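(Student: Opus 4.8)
The plan is to read off the skewness $\mathcal{S}(Z_n)$ to leading order from the moment formulas already at hand and to show that it diverges to $+\infty$. In particular $\mathcal{S}(Z_n)>0$ for all large $n$, so the law of $Z_n$ is right-skewed; and since the normal distribution is symmetric --- its standardized third moment vanishes, while ours blows up --- the limiting law of $Z_n$, under any centering and scaling, cannot be Gaussian.

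First I would assemble the asymptotics of the four ingredients of $\mathcal{S}(Z_n)$. Using $\Psi(n)=\log n+O(1/n)$, the formulas for $\E[Z_n]$ and $\E\left[Z_n^2\right]$ quoted from~\cite{ZhangArXiv} become $\E[Z_n]=2n\log n+2\gamma n+o(n)$ and
$$\E\left[Z_n^2\right]=4n^2\log^2 n+8\gamma n^2\log n+\left(16+4\gamma^2-\tfrac{2\pi^2}{3}\right)n^2+O\!\left(n^{3/2}\right),$$
while the preceding proposition gives $\E\left[Z_n^3\right]=8n^3\log^3 n+24n^3\log^2 n+O\!\left(n^3\log n\right)$. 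Since $(\E[Z_n])^2=4(n-1)^2(\Psi(n)+\gamma)^2=4n^2\log^2 n+8\gamma n^2\log n+4\gamma^2 n^2+O(n\log^2 n)$, the $n^2\log^2 n$ and $n^2\log n$ contributions cancel on subtraction and $\Var[Z_n]=\left(16-\tfrac{2\pi^2}{3}\right)n^2+O(n^{3/2})$; note $16-2\pi^2/3>0$, so $(\Var[Z_n])^{3/2}=\left(16-2\pi^2/3\right)^{3/2}n^3\,(1+o(1))$.

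Next I would rewrite the numerator of $\mathcal{S}(Z_n)$ using $\E\left[Z_n^2\right]=\Var[Z_n]+(\E[Z_n])^2$, which turns the third central moment into $\E\left[Z_n^3\right]-(\E[Z_n])^3-3\E[Z_n]\Var[Z_n]$; the last term is only $O(n^3\log n)$. In the remaining difference, $\E\left[Z_n^3\right]$ contributes $8n^3\log^3 n+24n^3\log^2 n$ while $(\E[Z_n])^3=8(n-1)^3(\Psi(n)+\gamma)^3=8n^3\log^3 n+24\gamma n^3\log^2 n+O(n^3\log n)$; the $n^3\log^3 n$ terms cancel, and what survives at order $n^3\log^2 n$ is $(24-24\gamma)n^3\log^2 n$, whose coefficient is strictly positive because $\gamma<1$. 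Hence the third central moment is $24(1-\gamma)n^3\log^2 n+O(n^3\log n)$, eventually positive, and dividing by $(\Var[Z_n])^{3/2}$ yields
$$\mathcal{S}(Z_n)=\frac{24(1-\gamma)}{\left(16-2\pi^2/3\right)^{3/2}}\,\log^2 n\,\bigl(1+o(1)\bigr)\longrightarrow +\infty,$$
so $Z_n$ is skewed to the right and its limit law is not normal.

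The hard part is the cancellation in the third step: the dominant $n^3\log^3 n$ parts of $\E\left[Z_n^3\right]$ and $(\E[Z_n])^3$ necessarily coincide (this is just $\E[Z_n^3]\ge(\E[Z_n])^3$ at leading order) and must be subtracted off exactly before the genuine leading term $24(1-\gamma)n^3\log^2 n$ emerges; this is why the sub-leading terms of $\E\left[Z_n^3\right]$ --- and hence Lemmata~\ref{Lem:X} and~\ref{Lem:ZY} --- and the $n^2$-term of $\E\left[Z_n^2\right]$ all had to be carried through to full precision. A secondary point worth stating explicitly is that non-vanishing of the skewness alone would not rule out asymptotic normality; it is the divergence $\mathcal{S}(Z_n)\to\infty$ that does, since a standard-normal limit with uniformly integrable third powers would force $\mathcal{S}(Z_n)\to 0$.
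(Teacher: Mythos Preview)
Your approach is the same as the paper's: compute the skewness from the first three moments and show it is eventually positive. Your arithmetic, starting from the displayed asymptotics, is sound, and the qualitative conclusion agrees with the paper. There is, however, a quantitative discrepancy you should be aware of. You obtain a third central moment of order $n^3\log^2 n$ and hence
\[
\mathcal{S}(Z_n)\sim \frac{24(1-\gamma)}{\bigl(16-2\pi^2/3\bigr)^{3/2}}\,\log^2 n\longrightarrow +\infty,
\]
whereas the paper asserts that in the numerator the terms of orders $n^3\log^3 n$, $n^3\log^2 n$ \emph{and} $n^3\log n$ all cancel, leaving a contribution of exact order $n^3$ and a finite limiting skewness $\approx 2.7$. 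That stronger cancellation does not follow from the proposition's displayed formula $\E\!\left[Z_n^3\right]=(n+1)n(n-1)\bigl(8\log^3 n+24\log^2 n+O(\log n)\bigr)$: for the $n^3\log^2 n$ terms to cancel against those of $(\E[Z_n])^3=8(n-1)^3(\Psi(n)+\gamma)^3$, the coefficient $24$ would have to be $24\gamma$. The paper says it used its exact (unreported) expression for $\E\!\left[Z_n^3\right]$ in the proof, so the most plausible reading is that the ``$24$'' in the proposition is a misprint and that further cancellation occurs at the next order as well. Either way, both arguments yield a strictly positive skewness for large $n$ and the same theorem; just note that your growth rate for $\mathcal{S}(Z_n)$ hinges on a coefficient that is in tension with the paper's own cancellation claim. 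Your closing caveat about uniform integrability is well taken and applies equally to the paper's version of the argument.
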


\begin{proof}
	Recall the definition formula for ${\mathcal{S}}(Z_n)$:
	$${\mathcal{S}}(Z_n) = \frac{\E\left[Z_n^3\right] - 3 \E[Z_n] \E\left[Z_n^2\right] + 2 \bigl(\E[Z_n]\bigr)^3}{\bigl(\Var[Z_n]\bigr)^{3/2}}.$$
	Plugging in $\E\left[Z^3\right]$, $\E\left[Z^2\right]$ and $\E[Z_n]$, we find that the top three leading terms (of order $n^3 \log^3{n}$, $n^3 \log^2{n}$ and $n^3 \log{n}$) in the numerator are exactly canceled out, left with the highest nonzero term of order $n^3$. This is the same as the order of the leading term in the denominator. We thus come up with
	$$\mathcal{S}(Z_n) \sim \frac{-72 \gamma^2 + 164 \gamma - 41/3 + 4\pi^2 \gamma - 8 \gamma^3}{(16 - 2\pi^2/3)^{3/2}} \approx 2.7,$$
	as $n \to \infty$, and conclude that $Z_n$ does not converge to normal asymptotically. 
\end{proof}

\section{Zagreb index of caterpillars}

In mathematical chemistry, {\em caterpillar} is a popular model for representing the structure of benzoid hydrocarbon molecules~\cite{ElBasil, ElBasil1990}. In this section, we look into a class of random caterpillars by incorporating caterpillars with randomness. The class of random caterpillars considered grow in a preferential attachment manner, as described in Section~\ref{Sec:PORTs}. More precisely, at time $0$, there is a {\em spine} consisting of $m \ge 2$ (fixed) nodes, which were labeled with distinct numbers in $\{1, 2, \ldots, m\}$ from one end to the other. At each subsequent point, a {\em leaf} is linked to one of the spine
nodes with an edge, the probability being equal to its degree over the total degree of all spine nodes.

At time $n$, we denote the structure of a random caterpillar by $C_n$. We first give some graph invariants of $C_n$. The total number of nodes in $C_n$ is $(n + m)$, and total degree (of all nodes) is $(n + 2m - 2)$. Let $X_{n, j}$ be the number of leaves attached to spine node $i$, and let $D_{n, j}$ be the degree of spine node $i$. There is a instantaneous relation between $X_{n, j}$ and $D_{n, j}$. That is $D_{n, j} = X_{n, j} + 1$ for $j = 1,m$; $D_{n, j} = X_{n, j} +2$ for $j = 2, 3, \ldots, m - 1$. According to the evolution of $C_n$, the probability that the spine node $j$ is selected for recruiting a leaf at time $n$ is
$$\frac{D_{n - 1, j}}{\sum_{j = 1}^{m} D_{n - 1, j}}.$$
The Zagreb index of $C_n$ is given by
$$Z_n = {\tt Zagreb}(C_n) = \sum_{j = 1}^{m} D_{n, j} + n.$$

Note that we only account for caterpillars of $m \ge 2$ in this section, as the Zagreb index of a caterpillar of $m = 1$ ({\em star}) is deterministic; It is $(n^2 + n)$. In the next proposition, we derive the expectation of $Z_n$, and develop a weak law as well.

\begin{prop}
	For $n \ge 1$, the mean of the Zagreb index of caterpillars is
	$$\E[Z_n] = \frac{(3m - 4)n^2 + (4m - 3)(3m - 4)n + 2(2m - 3)}{(2m - 1)(m - 1)} + 2(2m - 3).$$
	As $n \to \infty$, we have 
	$$\frac{Z_n}{n^2} \convL \frac{3m - 4}{(2m - 1)(m - 1)}.$$
	This convergence takes place in probability as well.
\end{prop}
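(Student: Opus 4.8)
The plan is to turn the one-leaf growth rule into a stochastic recursion for $Z_n$, pass to conditional and then ordinary expectations, solve the resulting first-order linear recurrence in closed form, and read off the weak law from the asymptotics of $\E[Z_n]$. For the recursion, note that adding a leaf to spine node $j$ at step $n$ replaces $D^2_{n-1,j}$ by $(D_{n-1,j}+1)^2$ and creates one new node of degree $1$, so conditionally on $\field_{n-1}$ and on the event $\indicator_{n,j}$,
$$Z_n = Z_{n-1} + (D_{n-1,j}+1)^2 - D^2_{n-1,j} + 1 = Z_{n-1} + 2 D_{n-1,j} + 2.$$
Averaging over $j$ with weights $\Prob(\indicator_{n,j}\given\field_{n-1}) = D_{n-1,j}/(n+2m-3)$ produces $\sum_{j=1}^m D^2_{n-1,j}$, which equals $Z_{n-1}-(n-1)$ because each of the $n-1$ leaves present at time $n-1$ contributes exactly $1$ to $Z_{n-1}$. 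Hence
$$\E[Z_n\given\field_{n-1}] = \frac{n+2m-1}{n+2m-3}\,Z_{n-1} + \frac{4(m-1)}{n+2m-3},$$
and a further expectation gives $\E[Z_n] = \frac{n+2m-1}{n+2m-3}\E[Z_{n-1}] + \frac{4(m-1)}{n+2m-3}$, valid for $n\ge 1$ with $\E[Z_0]=Z_0 = 4m-6$ (the Zagreb index of the path $P_m$).

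To solve this, multiply by the summation factor $S_n := \prod_{k=1}^n \frac{k+2m-3}{k+2m-1}$. The product telescopes with a gap of two, giving $S_n = \frac{(2m-2)(2m-1)}{(n+2m-1)(n+2m-2)}$, and $\E[Z_n] = S_n^{-1}\bigl(\E[Z_0] + \sum_{k=1}^n \frac{4(m-1)}{k+2m-3}\,S_k\bigr)$, in which the forcing term is a sum of multiples of $\frac{1}{(k+2m-3)(k+2m-2)(k+2m-1)}$. Using $\frac{1}{(k+a)(k+a+1)(k+a+2)} = \tfrac12\bigl(g(k)-g(k+1)\bigr)$ with $g(k) := \frac{1}{(k+a)(k+a+1)}$ (here $a=2m-3$), this sum telescopes as well, so the whole expression collapses to a rational function of $n$. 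After simplification — in particular using $4m^2-6m+2 = 2(2m-1)(m-1)$ to merge the constant pieces — one obtains the stated formula, up to checking the constant term against the initial value ($\E[Z_1]=6$ when $m=2$, which any correct closed form must reproduce).

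For the weak law, dividing $\E[Z_n]$ by $n^2$ leaves the leading constant $\frac{3m-4}{(2m-1)(m-1)}$ plus an $O(1/n)$ remainder, so $\E[Z_n]/n^2 \to \frac{3m-4}{(2m-1)(m-1)}$. Since $\max_{1\le j\le m} D_{n,j}\le \sum_j D_{n,j} = n+2m-2$, we have $Z_n \le m(n+2m-2)^2 + n$, so $Z_n/n^2$ is uniformly bounded; convergence in probability would then upgrade to $L_1$ convergence by bounded convergence, and convergence in probability would follow from Chebyshev's inequality as soon as $\Var[Z_n] = o(n^4)$ is available.

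The step I expect to be the main obstacle is exactly this variance bound, and here a cleaner route also clarifies the limit. The spine degrees $(D_{n,1},\dots,D_{n,m})$ evolve as a classical $m$-colour \Polya{} urn with initial composition $(1,2,\dots,2,1)$, so $(D_{n,j}/n)_{j}$ converges almost surely to a Dirichlet$(1,2,\dots,2,1)$ vector $(V_1,\dots,V_m)$ and hence $Z_n/n^2 \to \sum_{j=1}^m V_j^2$ almost surely. This limit is genuinely random — for $m=2$ it is $V^2+(1-V)^2$ with $V$ uniform on $[0,1]$ — so $\Var[Z_n]$ is of order $n^4$, not $o(n^4)$; what the route above actually proves is that $\E[Z_n]/n^2$ converges to the stated constant and that $Z_n/n^2$ converges \emph{in distribution} to $\sum_j V_j^2$, whose mean $\frac{\sum_j \alpha_j(\alpha_j+1)}{\alpha_0(\alpha_0+1)} = \frac{3m-4}{(2m-1)(m-1)}$ (with $\alpha_0 = 2m-2$) coincides with that constant. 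I would therefore either state the $L_1$ and in-probability conclusions for the \emph{expectation-normalised} quantity, or replace the last two displays of the proposition by distributional convergence established through the urn limit.
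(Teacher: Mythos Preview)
Your derivation of the recurrence and its solution is exactly what the paper does: the same almost-sure one-step relation $Z_n = Z_{n-1} + 2D_{n-1,j} + 2$, the same substitution $\sum_{j=1}^m D_{n-1,j}^2 = Z_{n-1}-(n-1)$, the same first-order linear recurrence for $\E[Z_n]$ with initial value $Z_0 = 4m-6$, and the same closed form. Your use of an explicit summation factor is more detailed than the paper's ``solving the recurrence,'' but the content is identical.

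Where you diverge from the paper is on the convergence claims, and here you are right and the paper is wrong. The paper's proof simply asserts that ``both $L_1$ convergence and in-probability convergence of $Z_n/n^2$ are obtained immediately,'' without argument. Your \Polya{}-urn observation shows this is false: the spine degrees $(D_{n,1},\dots,D_{n,m})$ form a classical $m$-colour urn with initial composition $(1,2,\dots,2,1)$, so $(D_{n,j}/n)_j$ converges almost surely to a Dirichlet$(1,2,\dots,2,1)$ vector $(V_1,\dots,V_m)$, and hence $Z_n/n^2 \to \sum_j V_j^2$ almost surely (and in $L_1$, by bounded convergence). This limit is non-degenerate for every $m\ge 2$, so $Z_n/n^2$ cannot converge in probability, let alone in $L_1$, to the constant $\frac{3m-4}{(2m-1)(m-1)}$; that constant is merely $\E\bigl[\sum_j V_j^2\bigr]$. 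The paper's own subsequent corollary, which gives $\Var[Z_n] \sim c\,n^4$ with $c = \frac{6m^3-22m^2+29m-16}{(2m-1)^2(m-1)^2(2m+1)m} > 0$, confirms this: were the stated convergence true, one would need $\Var[Z_n] = o(n^4)$. (For $m=2$ your computation $\Var[V^2+(1-V)^2]=1/45$ matches the paper's coefficient exactly.) Your suggested repair --- replace the last two displays of the proposition by almost-sure and distributional convergence of $Z_n/n^2$ to $\sum_j V_j^2$ --- is the correct statement.
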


\begin{proof}
	We start with an almost-sure relation between $Z_n$ and $Z_{n - 1}$, conditional on $\field_{n - 1}$ and $\indicator_{n, j}$; that is,
	$$Z_n = Z_{n - 1} + (D_{n - 1, j} + 1)^2 - D^2_{n - 1, j} + 1 = Z_{n - 1} + 2D_{n - 1, j} + 2.$$
	This is identical to the almost-sure relation in Equation~(\ref{Eq:recZ}). Taking the expectation with respect to $\indicator_{n, j}$, we get
	\begingroup
	\allowdisplaybreaks
	\begin{align*}
	\E[Z_n \given \field_{n - 1}] &= Z_{n - 1} + 2 \sum_{j = 1}^{m} D_{n - 1, j} \frac{D_{n - 1, j}}{\sum_{j = 1}^{m} D_{n - 1, j}} + 2
	\\ &= Z_{n - 1} + \frac{2}{n + 2m - 3} \sum_{j = 1}^{m} D^2_{n - 1, j} + 2
	\\ &= Z_{n - 1} + \frac{2}{n + 2m - 3} \bigl(Z_{n - 1} - (n - 1)\bigr) + 2
	\\ &= \frac{n + 2m - 1}{n + 2m - 3} \, Z_{n - 1} + \frac{4(m - 1)}{n + 2m -3}.
	\end{align*}
	\endgroup
	Taking another expectation with respect to $\field_{n - 1}$, we obtain a recurrence for $\E[Z_n]$:
	$$\E[Z_n] = \frac{n + 2m - 1}{n + 2m - 3} \, \E[Z_{n - 1}] + \frac{4(m - 1)}{n + 2m -3}.$$	
	We solve the recurrence with initial condition $\E[Z_0] = Z_0 = 4m - 6$, to get the result stated in the proposition. Both $L_1$ convergence and in-probability convergence of $Z_n / n^2$ are obtained immediately.
\end{proof}

Towards the second moment of $Z_n$, we need the mean of a new topological index---the total of cubic degrees of all nodes in $C_n$. Let us denote this index by $Y_n$, and we have
$$Y_n := \sum_{j = 1}^{m} D^3_{n, j} + n.$$
The mean of $Y_n$ is given in the next lemma.

\begin{lemma}
	For $n \ge 1$, the mean of $Y_n$ is given by
	\begin{align*}
	\E[Y_n] &= \frac{3(2m - 3)n^3 + (27m^2 - 60m + 27)n^2}{(2m - 1)m(m - 1)}
	\\ &\qquad{} + \frac{(40m^3 - 111m^2 + 86m - 18)n}{(2m - 1)m(m - 1)} + 2(4m - 7).
	\end{align*}
\end{lemma}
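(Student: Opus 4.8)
The plan is to reuse the recurrence technique already applied to $\E[Z_n]$ for the caterpillar and to $\E[Y_n]$ for PORTs. First I would record the almost-sure one-step relation: conditional on $\field_{n-1}$ and on the event $\indicator_{n,j}$ that the leaf inserted at time $n$ attaches to spine node $j$, the degree of node $j$ goes up by one while a fresh leaf of degree one appears, so that the spine part of $Y$ changes by $(D_{n-1,j}+1)^3 - D_{n-1,j}^3$ and the leaf count increases by one:
$$Y_n = Y_{n-1} + (D_{n-1,j}+1)^3 - D_{n-1,j}^3 + 1 = Y_{n-1} + 3D_{n-1,j}^2 + 3D_{n-1,j} + 2,$$
which is exactly the shape of Equation~(\ref{Eq:recY}).

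Next I would average over $j$ using the preferential-attachment weights $\Prob(\indicator_{n,j} \given \field_{n-1}) = D_{n-1,j}/(n+2m-3)$, the total spine degree at time $n-1$ being $(n-1) + 2m - 2 = n + 2m - 3$. This gives
$$\E[Y_n \given \field_{n-1}] = Y_{n-1} + \frac{3}{n+2m-3}\sum_{j=1}^{m} D_{n-1,j}^3 + \frac{3}{n+2m-3}\sum_{j=1}^{m} D_{n-1,j}^2 + 2.$$
I would then eliminate the two power sums via the instantaneous identities $\sum_{j=1}^{m} D_{n-1,j}^3 = Y_{n-1} - (n-1)$ and $\sum_{j=1}^{m} D_{n-1,j}^2 = Z_{n-1} - (n-1)$, which collapses the display into a linear relation among $Y_{n-1}$, $Z_{n-1}$ and $n$ with multiplier $(n+2m)/(n+2m-3)$ on $Y_{n-1}$. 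Taking a further expectation over $\field_{n-1}$ and substituting the closed form of $\E[Z_{n-1}]$ from the preceding proposition produces a first-order linear recurrence for $\E[Y_n]$ whose forcing term is an explicit rational function of $n$ (a quadratic polynomial plus a $1/(n+2m-3)$ remainder).

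Finally I would solve this recurrence by the summation-factor method. The homogeneous multiplier $\prod_{k}(k+2m)/(k+2m-3)$ telescopes in blocks of three, the numerator of step $k$ cancelling the denominator of step $k+3$, leaving $(n+2m)(n+2m-1)(n+2m-2)$ over the constant $(2m)(2m-1)(2m-2)$; convolving this against the polynomial forcing term and imposing the initial value $\E[Y_0] = Y_0 = 2\cdot 1^3 + (m-2)\cdot 2^3 = 8m - 14$ (the spine at time $0$ being a path with two endpoints of degree one and $m-2$ interior nodes of degree two) yields the stated closed form, with the free term $2(4m-7)$ being precisely this initial value. The $L_1$ (hence in-probability) weak law for $Y_n/n^3$ would then follow by inspecting the leading term, as in the earlier results. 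I expect the only delicate point to be the bookkeeping in this last step: because the telescoping product advances in steps of three one must track the surviving factors carefully and verify that the lower-order coefficients of the particular solution collapse exactly to the displayed numerators; the structure of the argument, however, is entirely routine once $\E[Z_n]$ is in hand.
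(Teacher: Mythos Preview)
Your proposal is correct and follows essentially the same route as the paper: derive the one-step almost-sure relation $Y_n = Y_{n-1} + 3D_{n-1,j}^2 + 3D_{n-1,j} + 2$, average against the preferential weights $D_{n-1,j}/(n+2m-3)$, replace the spine power sums by $Y_{n-1}-(n-1)$ and $Z_{n-1}-(n-1)$, take a further expectation and substitute the known $\E[Z_{n-1}]$, then solve the resulting linear recurrence with initial value $Y_0 = 8m-14$. The only addition in your write-up is the explicit summation-factor telescoping (the multiplier $(n+2m)/(n+2m-3)$ indeed collapses in blocks of three), which the paper leaves implicit.
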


\begin{proof}
	We consider an almost-sure relation between $Y_n$ and $Y_{n - 1}$ analogous to Equation~(\ref{Eq:recY}):
	$$Y_n = Y_{n - 1} + 3 D^2_{n - 1, j} + 3 D_{n - 1, j} + 2.$$
	Taking the expectation with respect to $\indicator_{n, j}$, we get
	\begin{align*}
	\E[Y_n \given \field_{n - 1}] &= Y_{n - 1} + 3 \sum_{j = 1}^{m} D^2_{n - 1, j} \frac{D_{n - 1, j}}{\sum_{j = 1}^{m} D_{n - 1, j}} + 3 \sum_{j = 1}^{m} D_{n - 1, j} \frac{D_{n - 1, j}}{\sum_{j = 1}^{m} D_{n - 1, j}} 
	\\ &\qquad{}+ 2
	\\ &= Y_{n - 1} + \frac{3}{n + 2m - 3} \sum_{j = 1}^{m} D^3_{n - 1, j} + \frac{3}{n + 2m - 3} \sum_{j = 1}^{m} D^2_{n - 1, j} + 2
	\\ &= Y_{n - 1} + \frac{3\bigl(Y_{n - 1} - (n - 1)\bigr)}{n + 2m - 3} + \frac{3\bigl(Z_{n - 1} - (n - 1)\bigr)}{n + 2m - 3}  + 2.
	\end{align*}
	The recurrence for $\E[Y_n]$ is obtained by taking another expectation with respect to $\field_{n - 1}$, and by plugging the result of $\E[Z_{n - 1}]$. We solve the recurrence for $\E[Y_n]$ with initial condition $\E[Y_0] = Y_0 = 8m - 14$, and obtain the stated result.
\end{proof}

We are now ready to calculate the second moment of $Z_n$ in the next proposition.

\begin{prop}
	For $n \ge 1$, the second moment of the Zagreb index of caterpillars is given by
	$$
	\E\left[Z^2_n\right] = \frac{(9m^2 - 3m - 16)n^4 + 2(3m - 2)(12m^2 - 7m - 17)n^3 + O\left(n^2\right)}{(2m + 1)(2m - 1)m(m - 1)}.
	$$
\end{prop}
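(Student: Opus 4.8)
The plan is to reuse the recurrence machinery of the two preceding results: square the one-step almost-sure relation for $Z_n$, take the conditional expectation under the preferential-attachment rule, take a second expectation to bring in the already-known lower moments, and finally solve a first-order linear recurrence for $\E[Z_n^2]$.

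First I would start from the almost-sure relation established in the proof of the previous proposition, namely $Z_n = Z_{n-1} + 2 D_{n-1,j} + 2$ conditional on $\field_{n-1}$ and $\indicator_{n,j}$, and square it:
$$Z_n^2 = Z_{n-1}^2 + 4 Z_{n-1} D_{n-1,j} + 4 D_{n-1,j}^2 + 4 Z_{n-1} + 8 D_{n-1,j} + 4.$$
Then I would average over $j$ with weights $D_{n-1,j}/(n+2m-3)$, exactly as before, using the deterministic identities $\sum_{j=1}^{m} D_{n-1,j} = n+2m-3$, $\sum_{j=1}^{m} D^2_{n-1,j} = Z_{n-1} - (n-1)$ and $\sum_{j=1}^{m} D^3_{n-1,j} = Y_{n-1} - (n-1)$. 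The term $4 D^2_{n-1,j}$ then produces $4\bigl(Y_{n-1}-(n-1)\bigr)/(n+2m-3)$, so only the means $\E[Y_{n-1}]$ and $\E[Z_{n-1}]$ — both available above — are needed, and in particular no mixed moment $\E[Z_{n-1}Y_{n-1}]$ appears. Collecting the coefficient of $Z_{n-1}^2$ gives the factor $1 + 4/(n+2m-3) = (n+2m+1)/(n+2m-3)$, so after a second expectation we arrive at
$$\E[Z_n^2] = \frac{n+2m+1}{n+2m-3}\,\E[Z_{n-1}^2] + \frac{P(n,m)}{n+2m-3},$$
where $P(n,m)$ is an explicit polynomial, cubic in $n$ (with leading term driven by $4\,\E[Y_{n-1}]$), obtained by substituting the closed forms of $\E[Z_{n-1}]$ and $\E[Y_{n-1}]$.

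Next I would solve this recurrence. Its homogeneous part telescopes,
$$\prod_{k=1}^{n}\frac{k+2m+1}{k+2m-3} = \frac{(n+2m+1)(n+2m)(n+2m-1)(n+2m-2)}{(2m+1)(2m)(2m-1)(2m-2)},$$
a polynomial of degree four in $n$; by variation of parameters $\E[Z_n^2]$ equals this factor $h(n)$ times the sum of the initial value $\E[Z_0^2] = Z_0^2 = (4m-6)^2$ (the spine being a path at time $0$) and $\sum_{k=1}^{n} P(k,m)\big/\bigl((k+2m-3)\,h(k)\bigr)$. Since that summand is a rational function of $k$ whose denominator is a product of consecutive linear factors, it admits an elementary antidifference by partial fractions; the sum evaluates in closed form, and its product with $h(n)$ collapses to a polynomial of degree four in $n$ with coefficients rational in $m$. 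Reading off the $n^4$ and $n^3$ coefficients and absorbing everything of degree at most two into the $O(n^2)$ term yields the stated formula; one also sees structurally why the denominator is $(2m+1)(2m-1)m(m-1)$ — the factor $(2m+1)$ is inherited from $h(n)$ and the extra factor $m$ from the denominator of $\E[Y_n]$.

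The main obstacle is purely computational: assembling $P(n,m)$ correctly from the substituted moment formulas and then carrying out the partial-fraction summation while tracking all the $m$-dependent constants, so that the two leading numerator coefficients come out as $9m^2-3m-16$ and $2(3m-2)(12m^2-7m-17)$. A subtle point worth flagging is that the $n^4$-coefficient does \emph{not} reduce to the square of the limit $(3m-4)/\bigl((2m-1)(m-1)\bigr)$ from the weak law — the gap, contributed precisely by the $4 D_{n-1,j}^2$ term, is what will make $\Var[Z_n]$ of order $n^4$. A convenient internal check is that $\E[Z_n^2] - (\E[Z_n])^2$ must be a nonnegative polynomial in $n$; at leading order this amounts to $6m^3 - 22m^2 + 29m - 16 \ge 0$ for $m \ge 2$, which holds.
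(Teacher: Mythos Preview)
Your proposal is correct and follows essentially the same route as the paper: square the almost-sure relation, average over $j$ with the preferential-attachment weights to express everything via $Z_{n-1}$ and $Y_{n-1}$, take a further expectation, and solve the resulting first-order linear recurrence with initial condition $\E[Z_0^2]=(4m-6)^2$. Your write-up is in fact more explicit than the paper's about the homogeneous factor $(n+2m+1)/(n+2m-3)$, the telescoping product, and the partial-fraction summation, and your structural remarks about the denominator and the $\Var[Z_n]$ check are sound.
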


\begin{proof}
	We begin with squaring the almost-sure relation (between $Z_n$ and $Z_{n - 1}$).
	\begin{align*}
	Z^2_n &= (Z_{n - 1} + 2D_{n - 1, j} + 2)^2
	\\ &= Z^2_{n - 1} + 4D^2_{n - 1, j} + 4 + 4Z_{n - 1}D_{n - 1, j} + 4Z_{n - 1} + 8D_{n - 1, j}
	\end{align*}
	We take the average over $j$ to get
	\begin{align*}
	\E\left[Z^2_n \given \field_{n - 1}\right] &= Z^2_{n - 1} + 4 \sum_{j = 1}^{m} D^2_{n - 1, j} \frac{D_{n - 1, j}}{\sum_{j = 1}^{m} D_{n - 1, j}} + 4 
	\\&\qquad{}+ 4Z_{n - 1}\sum_{j = 1}^{m} D_{n - 1, j} \frac{D_{n - 1, j}}{\sum_{j = 1}^{m} D_{n - 1, j}} + 4Z_{n - 1} 
	\\&\qquad\qquad{}+ 8\sum_{j = 1}^{m} D_{n - 1, j} \frac{D_{n - 1, j}}{\sum_{j = 1}^{m} D_{n - 1, j}}
	\\ &= Z^2_{n - 1} + \frac{4\bigl(Y_{n - 1} - (n - 1)\bigr)}{n + 2m - 3} + 4 + \frac{4Z_{n - 1}\bigl(Z_{n - 1} - (n - 1)\bigr)}{n + 2m - 3}
	\\ &\qquad{} + 4 Z_{n - 1} +\frac{8\bigl(Z_{n - 1} - (n - 1)\bigr)}{n + 2m - 3}.
	\end{align*}
	We take another expectation with respect to $\field_{n - 1}$ to get the recurrence for $\E\left[Z^2_{n}\right]$. Solving the recurrence with the initial condition $Z^2_0 = \E\left[Z^2_0\right] = (4m - 6)^2$, we obtain the result stated in the proposition.
\end{proof}

The variance of $Z_n$ is the obtained immediately by taking the difference between $\E\left[Z^2_n\right]$ and $\E^2[Z_n]$.

\begin{corollary}
	For $n \ge 1$, the variance of the Zagreb index of caterpillars is
	$$\Var[Z_n] = \frac{(6m^3 - 22m^2 + 29m - 16)n^4 + O(n^3)}{(2m - 1)^2(m - 1)^2(2m + 1)m}.$$
\end{corollary}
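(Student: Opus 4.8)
The plan is to obtain $\Var[Z_n]$ directly from the identity $\Var[Z_n] = \E\left[Z^2_n\right] - \bigl(\E[Z_n]\bigr)^2$, substituting the two exact expressions supplied by the two preceding propositions (the one for $\E[Z_n]$ and the one for $\E\left[Z^2_n\right]$). Since both moments are available in closed rational form — quotients of polynomials in $n$ whose coefficients are rational functions of $m$ — the whole computation is finite and elementary; the only real work is to perform the subtraction over a common denominator and read off the surviving leading term in $n$.

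First I would square $\E[Z_n]$. From $\E[Z_n] = \frac{(3m-4)n^2 + (4m-3)(3m-4)n + 2(2m-3)}{(2m-1)(m-1)} + 2(2m-3)$, the dominant contribution to $\bigl(\E[Z_n]\bigr)^2$ is $\frac{(3m-4)^2 n^4}{(2m-1)^2(m-1)^2}$, with all further terms of order $n^3$ or lower in $n$. Next I would put this over the common denominator $(2m-1)^2(m-1)^2(2m+1)m$, which is a polynomial multiple of the denominator $(2m+1)(2m-1)m(m-1)$ occurring in $\E\left[Z^2_n\right]$ (the cofactor being $(2m-1)(m-1)$). After clearing denominators, the coefficient of $n^4$ in the numerator of $\Var[Z_n]$ becomes
$$(9m^2 - 3m - 16)(2m-1)(m-1) - (3m-4)^2(2m+1)m,$$
and expanding both products and collecting powers of $m$ yields $6m^3 - 22m^2 + 29m - 16$, exactly the numerator claimed. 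Every contribution of order $n^3$ — whether coming from the $n^3$ term of $\E\left[Z^2_n\right]$, from the cross terms in $\bigl(\E[Z_n]\bigr)^2$, or from the lower-order pieces of $\E[Z_n]$ — is absorbed into the $O(n^3)$ error, so no simplification of subleading coefficients is needed for the stated result.

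The main obstacle is not conceptual but bookkeeping: one must verify that the $n^4$ contributions from $\E\left[Z^2_n\right]$ and from $\bigl(\E[Z_n]\bigr)^2$ do not cancel identically, which is precisely the assertion that $6m^3 - 22m^2 + 29m - 16$ is not the zero polynomial. It is moreover worth recording that this cubic is strictly positive for every integer $m \ge 2$ — it equals $2$ at $m = 2$ and $35$ at $m = 3$, and the leading term dominates from there on — so that $\Var[Z_n]$ is genuinely of order $n^4$ rather than of lower order. If the full $n^3$ coefficient were desired, one would rerun the same subtraction retaining the $n^3$ terms of both moments, but that refinement is not required here.
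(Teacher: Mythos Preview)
Your proposal is correct and follows exactly the approach the paper indicates: the corollary is obtained by subtracting $\bigl(\E[Z_n]\bigr)^2$ from $\E\left[Z^2_n\right]$ using the two preceding propositions, and you have carried out the leading-term computation explicitly (the paper itself gives no details beyond stating that the variance is this difference). Your verification that the $n^4$ coefficient is $6m^3 - 22m^2 + 29m - 16$ and your remark that it is positive for $m \ge 2$ are accurate and slightly more thorough than what the paper provides.
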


\section{Conclusion}

In this article, we investigate the Zagreb index of three random networks, a class of random graphs extended from RRTs, PORTs, and preferential attachment caterpillars. For the first class, we show that the Zagreb index scaled by $\sqrt{n}$ is asymptomatically normal. For the second class, we prove that the asymptotic distribution of the Zagreb index is not normal by showing that the distribution is skewed to the right. For the third class, we find that the second moment and the variance of the Zagreb index have the same order. We thus conjecture that its asymptotic distribution is not normal as well.

One of the possible future work is to study the Zagreb index of more general preferential attachment networks~\cite{Barabasi}, a class of networks extended from PORTs. The recurrence method seems not amenable owing to the non-uniformity of the sampling distribution. One alternative approach is to exploit the degree profile of preferential attachment networks developed in~\cite{Pekoz}. We will conduct the investigation in this direction and report the results elsewhere.

%\bibliographystyle{amsplain}
%\bibliography{yourbibfilename}

% add below the content of your .bbl file produced by bibtex.

\end{document}